\newcommand{\CA}{\mathcal A}
\newcommand{\CB}{\mathcal B}
\newcommand{\CF}{{\mathcal F}}
\newcommand{\CN}{{\mathcal N}}
\newcommand{\CX}{\mathcal X} 
\newcommand{\CY}{\mathcal Y}
\newcommand{\CV}{\mathcal V}
\newcommand{\CZ}{\mathcal Z}
\newcommand{\FC}{\mathfrak C}
\def\cov{\mathrm{cov}}
\def\pp{\mathrm{pp}}
\def\Bbb{\mathbb}
\def\L{{\Bbb L}}
\def\acc{\textsf{acc\,}}
\def\StNaamen{\textrm{StNaamen}}
\def\QtNaamen{\textrm{QtNaamen}}
\def\QtN{\textrm{QtNaamen}}
\def\Qt{\textrm{QtNaamen}}
\def\cof{{\mathrm{cof}\,}}
\def\rightc{\xrightarrow{(c)}}
\def\rightf{\xrightarrow{(f)}}
\def\rightw{\xrightarrow{(w)}}
\def\leftw{\xleftarrow{(w)}}
\def\rightwc{\xrightarrow{(wc)}}
\def\rightwf{\xrightarrow{(wf)}}
\def\rightcwf{\xrightarrow{(cwf)}}
\def\0{\varnothing}
\def\rtt{\rightthreetimes}
\def\lra{\longrightarrow}
\def\rra{\Rightlongarrow}
\def\ilim{{\raisebox{0pt}{$\bigcirc$}} \kern -0.31cm \hbox{$\Yleft$}}
\def\aB{\aleph_{\beta}}
\def\ab*{\aleph^*_{\beta}}
\def\aa{\aleph_{\alpha}}
\def\pb{\perp_\beta}
\def\qtk{\StNaamen_{\kappa}} \def\qtkappa{\qtk}
\def\qtkk{\StNaamen^+_{\kappa}}
\def\ukappa{\bigcup_{<\kappa}} \def\Qtk{\QtNaamen_{\kappa}}
\def\Qtkk{\QtNaamen^+_{\kappa}}
\DeclareMathAlphabet{\mathpzc}{OT1}{pzc}{m}{it}
\newcommand\parno{\stepcounter{subsubsection}\paragraph}
\def\L{\mathbb L}
\def\LL{{\Bbb L}}
\def\Lc{{\Bbb L_c}}
\author[M. Gavrilovich]{Misha Gavrilovich}
\thanks{The first author was partially supported by a MODNET (European Commission
Research Training Network) grant and by the Skirball foundation as a post-doctoral fellow at Ben Gurion University.}
\address{Department of mathematics\\
Ben Gurion University of the Negev\\
Be'er Sheva,\\
Israel}
 \email{gavrilovich@gmail.com}
\author[A. HASSON]{Assaf Hasson}
\thanks{The second author was partially supported by GIF grant No. 2266/2010 and by ISF grant No. 1156/10}
\address{Department of mathematics\\
Ben Gurion University of the Negev\\
Be'er Sheva,\\
Israel} \email{hassonas@math.bgu.ac.il}
\title{Exercices de style: A homotopy theory for set theory II}
\begin{document}

\begin{abstract}
	This is the second part of a work initiated in \cite{GaHa}, where we
	constructed a model category, $\Qt$, for set theory. In the present paper we
	use this model category to introduce homotopy-theoretic intuitions to set
	theory. Our main observation is that the homotopy invariant version of
	cardinality is the covering number of Shelah's PCF theory,
	and that other combinatorial objects, such as
	Shelah's revised power function -
	the cardinal function featuring in  Shelah's revised GCH theorem -
	can be obtained using similar tools.  We include a small ``dictionary'' for
	set theory in $\QtNaamen$, hoping it will help in finding more meaningful
	homotopy-theoretic intuitions in set theory.
\end{abstract}

\maketitle


\epigraph{
Every man is apt to form his notions of things difficult to be apprehended,
or less familiar, from their analogy to things which are more familiar.
Thus, if a man bred to the seafaring life, ...
should take it into his head to
philosophize concerning the faculties of the mind, it cannot be doubted,
but he would draw his notions from the fabric of the ship, and would find
in the mind, sails, masts, rudder, and compass.}
{                -- Thomas Reid, "An Inquiry into the Human Mind", 1764
}

\section{introduction}
This is the second part of the paper \cite{GaHa}, and continues the work initiated therein. In the first part of this paper we constructed a model category, $\QtNaamen$, for set theory. Loosely speaking, $\QtNaamen$ can be thought of as the simplest model category for set theory modelling the notions of finiteness, countability and (infinite) equi-cardinality. From the purely category theoretic point of view $\QtNaamen$ is extremely simple (arrows are unique whenever they exist, so - e.g. - all diagrams commute), but as a model category the picture is slightly more complicated. On the one hand, most basic tools of model categories (such as the loop and suspension functors) degenerate in $\QtNaamen$, but - on the other hand - as a model category $\QtNaamen$ does not seem to be such a trivial object (and the homotopy category associated with it is - at least to us - a new set theoretic object). 

From the homotopy theoretic point of view, given the axioms of model categories and the set theoretic notions to be modelled the construction of $\QtNaamen$ is almost automatic (this is one of the main themes of \cite{GaHa}). Therefore, from that viewpoint $\QtNaamen$ should be an almost unavoidable (though somewhat degenerate) object. But, as far as we were able to ascertain, $\QtNaamen$ (or any close relative thereof) is not known (under the appropriate translation to set theoretic language, of course) to set theorists. On the face of it, it could be that the reason $\QtNaamen$ was not discovered by neither homotopy theorists nor set theorists is that it is too degenerate to be of interest. The aim of this paper is to show that this is, maybe,  not entirely true. In the main result of the present paper we show that Shelah's covering numbers - one of the main objects of interest in PCF theory - discovered  a century or so after Cantor's introduction of the notions of countability and cardinality, cannot be missed if one tries to study these notions from the homotopy theoretic point of view. Technically, we prove: 

\begin{thm}\label{mainintro}
 Let $\lambda$ be any cardinal. Then 
\[
 \Lc\card(\{\lambda\})=\cov(\lambda,\aleph_1,\aleph_1,2).
\]
where $\Lc\card$ is the cofibrantly replaced  left derived functor of the cardinality function (not functor (!)) $\card:QtNaamen\to On^\top$. 
\end{thm}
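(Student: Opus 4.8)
The plan is to reduce Theorem~\ref{mainintro} to an estimate on the least possible cardinality of a cofibrant replacement of $\{\lambda\}$ and then to pin that cardinality down by a matching pair of inequalities. Since $\card$ is only an assignment on objects and not a functor, the first step is to make sense of $\Lc\card$: by definition $\Lc\card(X)=\card(QX)$ for a cofibrant replacement $QX\xrightarrow{\sim}X$, so I would begin by checking that this is independent of the chosen replacement. Any two cofibrant replacements of $\{\lambda\}$ are connected by a weak equivalence over $\{\lambda\}$, so it is enough to invoke the description of weak equivalences and of the underlying-set functor from \cite{GaHa} to see that a weak equivalence between cofibrant objects of $\QtNaamen$ does not change $\card$. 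With this in place, $\Lc\card(\{\lambda\})$ is the minimum of $\card(\widetilde X)$ over cofibrant $\widetilde X$ carrying a trivial fibration $\widetilde X\xrightarrow{\sim}\{\lambda\}$; recalling in addition the characterisation of cofibrant objects from \cite{GaHa} --- roughly, objects freely assembled from countable pieces and carrying no ``spurious'' countable subobjects --- the theorem becomes the combinatorial assertion that this minimum equals $\cov(\lambda,\aleph_1,\aleph_1,2)=\cf([\lambda]^{\le\aleph_0},\subseteq)$.

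For the upper bound I would fix a $\subseteq$-cofinal family $\CF\subseteq[\lambda]^{\le\aleph_0}$ of minimal size $\cov(\lambda,\aleph_1,\aleph_1,2)$ --- here the last parameter $2$ is exactly what makes ``cofinal by single members'' the correct notion --- and build from it the cofibrant object $X_\CF$ supported by $\CF$: one countable ``probe'' $c_s$ (the object with underlying set $s$ and its standard structure) for each $s\in\CF$, freely glued, together with the canonical morphism $X_\CF\to\{\lambda\}$ coming from the inclusions $s\hookrightarrow\lambda$. Then $X_\CF$ is cofibrant, being freely assembled from cofibrant countable pieces, and the canonical morphism is a trivial fibration: a fibration because $\bigcup\CF=\lambda$, and a weak equivalence because cofinality of $\CF$ forces every map from a countable probe to $\{\lambda\}$ to factor through some $c_s$. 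Thus $X_\CF$ is a cofibrant replacement, and $\Lc\card(\{\lambda\})\le\card(X_\CF)=\cov(\lambda,\aleph_1,\aleph_1,2)$.

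For the lower bound, let $p\colon\widetilde X\xrightarrow{\sim}\{\lambda\}$ be an arbitrary cofibrant replacement. Given a countable $s\subseteq\lambda$, the inclusion $s\hookrightarrow\lambda$ is a morphism $c_s\to\{\lambda\}$; since $c_s$ is cofibrant and $p$ is a trivial fibration, it lifts along $p$, so $s$ lies in the $p$-image of some countable subobject of $\widetilde X$. Hence the $p$-images of the countable subobjects of $\widetilde X$ form a $\subseteq$-cofinal subfamily of $[\lambda]^{\le\aleph_0}$, and since cofibrancy of $\widetilde X$ rules out superfluous countable subobjects there are only $\card(\widetilde X)$ of them, not the \emph{a priori} much larger $\card(\widetilde X)^{\aleph_0}$. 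Therefore $\card(\widetilde X)\ge\cf([\lambda]^{\le\aleph_0},\subseteq)=\cov(\lambda,\aleph_1,\aleph_1,2)$, which together with the upper bound proves the theorem. (The small cases $\lambda\le\aleph_0$, where both sides should come out $1$ because the single countable probe $c_\lambda$ is already cofibrant, I would check directly against the conventions of Part~I.)

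The step I expect to be the real obstacle is the one used in the lower bound: that a cofibrant object $\widetilde X$ has only $\card(\widetilde X)$-many countable subobjects rather than $\card(\widetilde X)^{\aleph_0}$ of them. This is exactly where the fine structure of cofibrations established in \cite{GaHa} does all the work, and it is why the answer is the covering number and not some crude power of $\aleph_0$. A smaller but still genuine point is matching the last argument $2$ of $\cov(\lambda,\aleph_1,\aleph_1,2)$ with the model structure: I would need to confirm that a trivial fibration out of a free gluing of countable probes detects coverability of a countable set by a \emph{single} member of the index family rather than by countably many of them, which is forced by the convention in \cite{GaHa} that the countable objects are the elementary weakly trivial building blocks of $\QtNaamen$. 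The remaining verifications --- existence and uniqueness of the maps involved, functoriality of the gluing, stability of cofibrations under it --- are routine once the machinery of Part~I is in hand.
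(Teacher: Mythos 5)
Your argument identifies the wrong quantity as $\Lc\card(\{\lambda\})$, and this is where the real content of the theorem lives. In Definition \ref{crldf} the cofibrantly replaced left derived functor of a mere \emph{function} is the minimum of $\card(Y)$ over all cofibrant $Y$ admitting a morphism $\{\lambda\}\lra_h Y$ \emph{in the homotopy category}, i.e.\ over all finite zigzags $\{\lambda\}\lra X_1\leftw X_2\lra\cdots\lra X_n\lra Y$ with $\0\rightc Y$ --- not merely over trivial fibrations $Y\rightwf\{\lambda\}$. Because $\card$ is not a functor, these two minima do not obviously coincide: the zigzag class is strictly larger, so your lower bound, which only treats cofibrant replacements $\widetilde X\rightwf\{\lambda\}$, gives $\min\{\card\widetilde X\}\ge\cov(\lambda,\aleph_1,\aleph_1,2)$ over the smaller class and hence does \emph{not} bound $\Lc\card(\{\lambda\})$ from below. (Within that smaller class the bound is in fact nearly trivial and your worry about $\card(\widetilde X)^{\aleph_0}$-many countable subobjects is moot: by Proposition \ref{combinatorics}, a cofibrant $\widetilde X$ with $\widetilde X\rightwf\{\lambda\}$ is literally a class of countable subsets of $\lambda$ in which every countable subset of $\lambda$ is contained, i.e.\ already a covering family.) The missing work is exactly what the paper's Claim I and Claim II supply: one must show that along an arbitrary zigzag every countable $L\subseteq\lambda$ remains captured \emph{up to a finite error} at each stage (this is where $(w)$-arrows, which only control containment modulo finite sets, enter), and then invoke the uniformization of Lemma 35 of \cite{GaHa} to extract a single finite set $C$ with $\lambda\setminus\Lambda=C$ and convert the terminal cofibrant object $\CY$ into an honest covering family $\{y\cup C:y\in\CY''\}$ of the same cardinality. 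Without this step you have only the inequality $\Lc\card(\{\lambda\})\le\cov(\lambda,\aleph_1,\aleph_1,2)$.

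A secondary but genuine error: your opening claim that $\card(Q X)$ is independent of the chosen cofibrant replacement because weak equivalences between cofibrant objects preserve $\card$ is false, and the paper's own example refutes it --- $\{\{\bullet_1\},\{\bullet_1,\bullet_2\}\}\rightcwf\{\{\bullet_1,\bullet_2\}\}$ is an isomorphism between cofibrant objects with $\card$ equal to $2$ and $1$ respectively. This is precisely why $\Lc\card$ must be defined as a minimum rather than as the value at ``the'' replacement; you recover from this by passing to a minimum, but the justification you give for well-definedness is not available. Your upper bound, by contrast, matches the paper's: a covering family $\CY$ satisfies $\0\rightc\CY\rightwf\{\lambda\}$ by Proposition \ref{combinatorics}, which is exactly the witness used there.
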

The proof of Theorem \ref{mainintro} is, essentially, a triviality, but its formulation - at least for those not fluent in model category jargon - is far from obvious. The main part of this paper is dedicated to explaining the formulation of Theorem \ref{mainintro}, and explaining - given the model category $\QtNaamen$ and the cardinality function $\card:QtNaamen\to On^\top$ (where $On^\top$ is the class of ordinal augmented by a terminal object) - how to obtain the functor $\Lc\card$. We then show how other covering numbers (such as Shelah's revised power function) can be recovered. This and similar constructions are discussed in Section \ref{other}.

It is intriguing that Shelah, in his book on Cardinal arithmetic, \cite{ShCard}, and Kojman, in his survey of Shelah's PCF theory, \cite{KojABC}, use algebraic topology as an analogy to explain the ideology and the usefulness of this theory, Kojman writes, rather directly that ''This approach to cardinal arithmetic can be thought of as 'algebraic set theory' in analogy to algebraic topology" and Shelah, more by way of example mentions that: "...  for a polyhedron $v$ (number of vertices),  $e$ (number of edges) and $f$ (number of faces) are natural measures, whereas $e + v + f$ is not, but from deeper point of view [the homotopy-invariant Euler characteristic] $v - e + f$ runs deeper than all...".  Theorem \ref{mainintro} and its variants can be viewed as consolidating this analogy: they show that (some constructs of) PCF theory have an actual interpretation in terms of algebraic topology.  But - at this stage - it is not clear whether this can be pushed much further, whether this connection with algebraic topology runs any deeper. 

Of course, the covering numbers are not the only set theoretic notions that one can recover in $\QtN$. In \cite{GaHa} we saw that $\QtNaamen$ models finiteness, countability and equi-cardinality (at least to some extent). In the present paper we slightly enlarge the set theoretic dictionary of $\QtNaamen$, giving some natural examples and non-examples (of set theoretic notions that $\QtN$ cannot capture - e.g., the power set of a set). Notions such as a cardinal being measurable (Lemma \ref{measurable}) and intriguing possible connections with Jensen's covering lemma are discussed in Section \ref{other}. 

As already mentioned, this is the second part of \cite{GaHa}. We expect readers of this paper to be familiar with the terminology and notation of \cite{GaHa}, but for ease of reference we dedicate Section 2 to a a concise rendering of the main definitions and notational conventions. 
 In Section 3 - as a warm up - we discuss various examples on how to use the model category $\QtNaamen$ in order to describe some basic notions of set theory, and describe (some of) its limitations. The statement and proof of Theorem \ref{mainintro} are given in Section 4: we explain the notions of derived functors and how to compute them in posetal categories. The last sub-section of Section 4 is dedicated to a brief overview of possible variants. We conclude the paper with some ideas for further investigation, emanating mainly form problems we identified in our construction: can we overcome the dependence of the derived functor of, say, cardinality on the choice of the model category (among equivalent model categories), can we find analogues for homotopy theory constructs (homotopy groups, long exact sequences etc.) in $\Qt$ despite of it being "degenerate", can we actually prove set theoretic statements using $\Qt$ (and the family of model structures $\Qt_\kappa$) 
 and not only recover known concepts and definitions?

\begin{rem}[Set-theoretic foundations]
It is often the case when working with categories that the category is \emph{large}, namely that the objects and morphisms do not form a set, but rather a proper class. In the present work the objects of $\QtNaamen$ themselves are proper classes, and $\Ob\QtNaamen$ is the collection of all classes. To avoid paradoxes one has to be careful, so some words concerning foundational issues may be in place. 

There are many standard solutions for situations as described in the previous paragraph (most of them the authors are not familiar enough to say much about), and as we are using very little set theory, we believe that any of them could suit us with essentially no effect on the results. Probably, the  easiest way to avoid foundational issues is (assuming the consistency of ZFC, of course) to choose and fix a transitive set-sized model $\underline\CV=(\CV,\in)$
of ZFC, and consider all constructions as taking place within $\underline\CV$: read below a {\em class} 
as {\em a $V$-definable subset of $\CV$} (namely, the objects of our category are elements of $\mathbb P(V)$).  This proves the consistency of our  construction (relative to the consistency of ZFC). Stronger assumptions (e.g. large cardinal assumptions) could provide us with models whose notions of subset, ordinals, cardinals etc. coincides with the the corresponding notions in the "real" universe.  We remark, moreover, that our construction seems to fit quite easily in set theories equi-consistent with ZFC, such as NBG.
\end{rem}

\section{Definitions of c-w-f arrows, notation and the construction of the model category.}

We assume the reader familiar with the notation and terminology of \cite{GaHa}, but for ease of reference we dedicate the present section to a concise rendering of the main definitions, notational conventions and results
of \cite{GaHa}. 

Recall that a category $\FC$ is a pair $(\Ob \FC, \Mor\FC)$ of \emph{objects} and \emph{morphisms} (or \emph{arrows}) each carrying its own notion of equality. The arrows of a category can be composed whenever the composition makes sense (i.e. the arrows $X\lra Y$ and $Y'\lra Z$ can be composed whenever the objects $Y$ and $Y'$ are equal to produce and arrow $X\lra Z$). It is also required that to any object $X$ there is a special morphism $\id_X$, neutral with respect to left and right composition. Given a category $\FC$ it is often convenient (and we do it quite often in the present paper) to represent data in $\FC$ by means of a directed graph (possibly with loops and multiple edges between two vertices) whose nodes are objects and whose edges are morphisms. Such a diagram is \emph{commutative} if the composition of arrows along any path in the graph depends only on the starting point and the ending point of the path, but not on the path itself. 

A \emph{labelled category} is a category where to each arrow is associated a (possibly empty) set of labels of a set $S$ of labels. We require that for any label $s\in S$ the collection of all $s$-labelled arrows is itself a category. Namely, given a category $\FC$ the collection 
\[
\Ob_s \CF:=\{X\in \Ob\FC: \exists Y\in \Ob \FC: Y\xrightarrow{(s)} X\lor X\xrightarrow{(s)} Y\}
\]
with the collection $\Mor_s\FC$ of all $s$-labelled arrows of $\FC$ is a category. Observe that in a labelled category the identity morphisms must carry all labels. 

Before we explain what is a model category, it will be convenient to remind that given a category $\FC$ and arrows 
\begin{figure}[H]
\centerline{
\xymatrix @R=2pc @C=2pc{
X \ar[r] \ar[d] 
& W \ar[d] \\
Y \ar[r]
& Z }}
\caption{}\label{square}
\end{figure}
The arrow $X\lra Y$ lifts with respect to the arrow $W\lra Z$ if 
for every commutative diagram as in Figure~\ref{square}, 
there exists an arrow $Y\lra W$ making the resulting diagram commute. We denote this property by $X\lra Y\rtt W\lra Z$ and say that $X\lra Y$ left lifts with respect to $W\lra Z$ (or that $W\lra Z$ right lifts with respect to $X\lra Y$). Note that the notation $X\lra Y\rtt W\lra Z$ implicitly implies that  $X\lra Y$ and $W\lra Z$) but not necessarily that $X\lra W$ and $Y\lra Z$. In
 particular, the lifting property $X\lra Y\rtt W\lra Z$ holds (vacuously) if there does
 not exist an arrow $X\lra W$ or if there does not exist an arrow $Y\lra Z$. 

A model category is a $\{c,f,w\}$-labelled category, $\FC$, satisfying the following axioms: 

\begin{description}
 \item[(M0)] As a category $\FC$ is closed under (finite) direct and inverse limits. 
\item[(M1)] $(wc)\rtt (f)$ and $(c)\rtt (wf)$ (i.e., any appropriately labelled diagram as in Figure~\ref{square} has the lifting property).  
\item[(M2)] For any arrow $X\lra Y$ there are objects $X_{(wc)}$ and $X_{(wf)}$ such that $X\rightwc X_{(wc)}\rightf Y$ and $X\rightc X_{(wf)} \rightwf Y$ making the resulting diagrams commute. 
\item[(M3)] This is the axiom asserting that a model category is a labelled category. 
\item[(M4)] The pushforward of an arrow labelled (wc) and the pullback of an arrow labelled (wf) are both labelled (w).
\item[(M5)] Given a triangle $X\lra Y\lra Z\lra X$, if any two of the arrows are labelled $(w)$ so is the third. 
 \end{description}

If, in addition, the model category satisfies the requirement that any two of the labels determine the third, the model category is called \emph{closed}. 

\subsection{The model category $\QtNaamen$}
The model category $\QtNaamen$ whose construction is the main concern of \cite{GaHa} can be, roughly, thought of as the simplest model category modelling the notions of finiteness, countability and equi-cardinality. Below we give a combinatorial rendering of the $\{c,w,f\}$-labelling of our category. From this point of view, the construction may seem somewhat mysterious - so we start with a more ``geometric'' overview of the construction. 

We start with the category $\rm{Sets^\subseteq}$, whose objects are sets, and whose arrows are inclusions: 
\begin{description}
 \item[$A\lra_0 B$] $A$ and $B$ are sets and $A\subseteq B$. 
\item[$A\rightwc_0 B$] $A\lra_0 B$ and $B\setminus A$ is finite. 
\item[$A\rightc_0 B$] $A\lra_0 B$ and $\card(A)+\aleph_0=\card(B)+\aleph_0$. 
 \end{description}
This category does not satisfy (M0) as it does not have a terminal object, so we add one formally, $\top$. But now the arrow $\0\lra \top$ does not satisfy (M2): it is not too hard to see that $\0_{(wc)}$ should be the ``direct limit'' of all finite sets - which can be identified with the (proper) class of all finite sets. So we replace the category we are working with. The objects, $\Ob\StNaamen$, are all classes of sets and the morphisms, $\Mor\StNaamen$ are given by: 

\begin{description}
 \item[$X\lra Y$] For all $x\in X$ there exists $y\in Y$ such that $x\lra_0 y$. 
\end{description}

We can naturally identify $\rm{Sets^\subseteq}$ with a (full) sub-category of $\StNaamen$, inducing a labelling  (by the labels (c) and (wc)) of a class of arrows in $\Mor \StNaamen$. The full labelling on $\StNaamen$ is the one generated by this labelling (namely, the most economical labelling on $\StNaamen$ respecting the labelling of $\rm{Sets^\subseteq}$ and satisfying (M2)). More precisely: 

\begin{description}
\item[$X\rightf Y$] $X\lra Y$ and $(wc)_0 \rtt X\lra Y$. 
\item[$X\rightwf Y$] $X\lra Y$ and $(c)_0 \rtt X\lra Y$. 
\item[$X\rightc Y$] $X\lra Y$ and $X\lra Y\rtt (wf)$. 
\item[$X\rightwc Y$] $X\lra Y$ and $X\lra Y\rtt (f)$. 
\item[$X\rightw Y$] if $X \lra Y$ and there exists $Z$ such that $X\rightwc Z\rightwf Y$. 
\end{description}
 
The above labelling has a combinatorial interpretation (see \cite[Proposition 16]{GaHa}): 

\begin{prp}\label{combinatorics}
The set theoretic interpretation of the last definition is: 

\bi
\item[(f)]an arrow $\CA\lra \CB$ is labelled (f) if and only if
for every $A\in \CA\cup\{\emptyset\}$, $B\in \CB$ and a finite
subset $\{b_1,\dots ,b_n\}\subseteq  B$ there exists $A'\in \CA\cup\{\emptyset\}$ 
such that $(A\cap B)\cup\{b_1,...,b_n\}\subseteq A'$.

\item[(wf)]an arrow $\CA\lra \CB$ is labelled (wf) if and only if
for every $A\in \CA\cup\{\emptyset\}$, $B\in \CB$ and subset $B'\subseteq B$ such that $\card B'\leq \card (A\cap B)+\aleph_0$,
 there exists $A'\in \CA\cup\{\emptyset\}$
such that $B'\subseteq A'$. 

\item[(wc)] an arrow $\CA\lra \CB$ is labelled (wc) if and only if
every $B\in \CB$ is contained, up to finitely many elements, in some $A\in \CA\cup\{\emptyset\}$ (i.e. $B\setminus A$ is finite for some $A\in \CA\cup\{\emptyset\}$).

\item[(c)]  an arrow $\CA\lra \CB$ is labelled (c) if and only if
for every $\{B\}\lra \CB$ there exists $A\in \CA\cup\{\emptyset\}$ such that $A\xrightarrow{\,\CB\,} B$, where we define 
$\CA\xrightarrow{\,\CB\,} B$ if there exist $n\in \mathbb N$ and $\{B_0,\dots B_n\}\lra \CB$ such that: 
\begin{enumerate}
 \item $\card(A\cap B_0)+\aleph_0=\card B_0 +\aleph_0$ , 
\item $\card(B_i\cap B_{i+1})+\aleph_0=\card B_{i+1}+\aleph_0$ for all $0\le i < n$, and 
\item $B=B_n$. 
\end{enumerate}
 
\item[(w)]an arrow $\CA\lra \CB$ is labelled (w) if and only if
for every $A\in \CA$, $B\in \CB$ and
subset $B'\subseteq B$ such that $\card B'\leq \card (A\cap B)+\aleph_0$,
 there exists $A'\in \CA$
such that $B'$ is contained in $A'$ up to finitely many elements.
\ei
\end{prp}

It turns out, using Proposition \ref{combinatorics}, that $\StNaamen$ with the above labelling satisfies (M0)-(M4). Moreover, the composition of two weak equivalences (i.e., the composition of two $(w)$-labelled arrows) is a weak equivalence (a fact that we will use freely).  But $\StNaamen$ does not satisfy the full axiom (M5). The following counter example is relatively easy to come by:

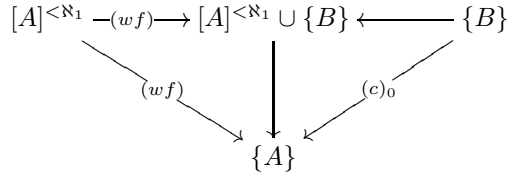
\begin{figure}[H]
\centerline{
 \xymatrix  @R=3pc @C=3pc{
[A]^{<\aleph_1} \ar[r]|-(0.4){(wf)} \ar[dr]|-{(wf)} & \ar[d][A]^{<\aleph_1} \cup \{B\}  & \{B\} \ar[l] \ar[dl]|-{(c)_0}\\
& \{A\} &
}}
\caption{Let $A,B$ be uncountable sets of the same cardinality and $[A]^{<\aleph_1}$ be the set of all countable subsets of $A$. Then the labelling of the arrows is as above, but $[A]^{<\aleph_1}\cup \{B\}\lra \{A\}$ will not, in general, be labelled (w).}
\end{figure}

\noindent This problem is addressed by restricting to a sub-category: 
\begin{definition}
 Say that an object $\CX$ in $\StNaamen$ is \emph{cute} if it satisfies the following diagram: 
\begin{figure}[H]
\centerline{
  \xymatrix  @R=3pc @C=3pc{
\ar@/^1pc/[rr] \ar[d]|-{(c)_0} & \ar[r] \ar[dl]|-{(wf)} & \CX \\
  \ar@{.>}[urr] 
}}
\caption{The diagram reads: for every commutative diagram of solid arrows as above, there exists a dashed arrow such that the resulting diagram commutes.}
We let $\QtNaamen$ be the full labelled sub-category of cute objects of $\StNaamen$. 
\end{figure}\label{Qt}
\end{definition}

The main result of \cite{GaHa} is: 
\begin{thm}
 The labelled category $\QtNaamen$ is a model category. The labelling is generated by co-fibrations between singletons. 
\end{thm}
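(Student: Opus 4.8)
The plan is to take as given everything the excerpt records about $\StNaamen$ --- that it satisfies (M0)--(M4), that the composite of two $(w)$-arrows is again a $(w)$-arrow, and the combinatorial description of the five labels in Proposition~\ref{combinatorics} --- and to spend the proof on just two things: checking that passing to the full labelled subcategory $\QtNaamen$ of cute objects costs nothing in axioms (M0)--(M4), and establishing the full two-out-of-three axiom (M5), which is exactly what cuteness was designed to repair. The second assertion, that the resulting labelling is generated by labelled arrows between singleton objects, will then be read off from the very construction of the labels.

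\emph{Axioms (M0)--(M4).} Since arrows of $\StNaamen$ are unique whenever they exist, its finite direct and inverse limits are joins and meets for the preorder $\CA\lra\CB\iff(\forall x\in\CA)(\exists y\in\CB)\,x\subseteq y$, with the formally adjoined terminal object on top. The point to verify for (M0) is that the class of cute objects is closed under these operations; as cuteness is a single extension property against a fixed pair of labelled arrows (see~\ref{Qt}), one expects only a short argument with Proposition~\ref{combinatorics} to be needed, after which the finite (co)limits of $\QtNaamen$ are literally computed as in $\StNaamen$. Granting this, (M4) descends from its $\StNaamen$-version; (M1) is inherited outright, since the lift it demands already exists in $\StNaamen$ and its endpoints are among the four corners of the square, hence already cute; and (M3), the statement that each label class is a subcategory, likewise descends from $\StNaamen$. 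The one axiom requiring real attention is the factorization axiom (M2): one must show that the standard $(wc)$--$(f)$ and $(c)$--$(wf)$ factorizations of an arrow between cute objects can be arranged with a cute middle object (adjusting the $\StNaamen$ construction if necessary), which I expect to be a genuine but routine bookkeeping exercise with the characterizations of Proposition~\ref{combinatorics}.

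\emph{Axiom (M5).} Because the composite of two weak equivalences is already known to be one, the two-out-of-three property reduces to the two cancellation statements: if $g\circ f$ and $f$ are $(w)$ then $g$ is $(w)$, and if $g\circ f$ and $g$ are $(w)$ then $f$ is $(w)$. In $\StNaamen$ the first of these fails along the triangle $[A]^{<\aleph_1}\lra[A]^{<\aleph_1}\cup\{B\}\lra\{A\}$ displayed above, whose first and composite arrows are $(wf)$ --- hence $(w)$ --- while the second is not $(w)$. The heart of the proof is to recognise that cuteness is precisely the hypothesis ruling out such triangles: the offending object $[A]^{<\aleph_1}\cup\{B\}$ is not cute, and when $\CA,\CB,\CC$ are all cute one can push the combinatorial condition characterising $(w)$ in Proposition~\ref{combinatorics} --- roughly, that every small, $\CA$-visible subset of a member of $\CB$ is almost covered inside $\CA$ --- through the composite in both directions. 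This last step is, I expect, the main obstacle: from a family of covers witnessing that $g\circ f$ is $(w)$ one must manufacture covers witnessing that $g$ (respectively $f$) is $(w)$, and the only genuinely new tool available for this is the extension supplied by cuteness (the dashed arrow of~\ref{Qt}); making two uses of it mesh is where the work lies.

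\emph{Generation by cofibrations between singletons.} This is mostly a matter of unwinding the construction in the excerpt: $(f)$ is defined as the class of arrows with the right lifting property against the $(wc)_0$-arrows and $(wf)$ against the $(c)_0$-arrows, and these are exactly the labelled arrows $\{A\}\lra\{B\}$ between singleton objects (the image of $\mathrm{Sets}^{\subseteq}$), while $(c)$ and $(wc)$ are the corresponding left-lifting classes and a $(w)$-arrow is a $(wc)$-arrow followed by a $(wf)$-arrow. What remains is to check that re-running this generation inside $\QtNaamen$ --- where the only squares one must fill are those with all corners cute --- returns the restriction to $\QtNaamen$ of the combinatorial labelling; since every singleton object is itself cute, the two weak factorization systems of $\StNaamen$ restrict cleanly, and the combinatorial characterisations of Proposition~\ref{combinatorics}, being about membership and cardinality rather than about the ambient category, are visibly unchanged by the restriction. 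This closes the proof outline.
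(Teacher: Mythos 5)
This theorem is not proved in the present paper at all: it is the main result of the companion paper \cite{GaHa} and is only quoted here, so there is no in-text proof to compare your outline against. Your overall strategy --- inherit (M0)--(M4) from $\StNaamen$ and use cuteness to repair the failure of (M5) exhibited by the triangle through $[A]^{<\aleph_1}\cup\{B\}$ --- is indeed the strategy the construction is designed for, and your reading of ``generated by cofibrations between singletons'' (the $(c)_0$/$(wc)_0$ arrows between singletons determine $(wf)$/$(f)$ by right lifting, then $(c)$/$(wc)$ by left lifting, then $(w)$ by factorization) is correct.

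That said, two of the steps you wave at are precisely where the content of the theorem lives, and one concrete claim looks wrong. First, cute objects are not closed under the colimits of $\StNaamen$: in the $\kappa$-relative version treated later in this paper the direct limit in $\Qtk$ is explicitly $\widetilde{\Sigma_{i} X_i}$, where $\tilde X$ is the \emph{product of all cute $Y$ with $X\lra Y$} --- that is, colimits must be recomputed via a reflection into cute objects, not ``literally computed as in $\StNaamen$''. Your (M0) paragraph, and with it the claim that (M4) simply descends, needs this repair. Second, (M2) is not routine bookkeeping: one must show that the canonical middle objects of the two factorizations are (or can be replaced by) cute, and the paper's own pointer to ``Remark 37 in \cite{GaHa} for the fixed point argument needed for the proof of the analogue of Lemma 35'' signals that a genuine fixed-point/closure argument is required there. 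Third, for (M5) you correctly identify that the offending object is not cute, but the cancellation argument itself --- manufacturing covers witnessing that $g$ is a weak equivalence from those for $g\circ f$ by invoking the extension property of cuteness --- is exactly the part you defer, so as written the outline contains no proof of the one axiom that actually fails upstairs. In short, the proposal is a correct plan of attack, but the three places you label routine or postpone are the theorem.
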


We remark that the category $\QtNaamen$ has limits and co-limits of arbitrary (not necessarily small) \emph{well-defined} 
diagrams [A\S3.1, Remark 9]; the notion of of ``well-defined'' depends on the set theoretic  foundations used. E.g., if the objects of $\Qt$ are \emph{cute} classes, then the limits of any class of classes (i.e. any uniformly definable collection of classes) exist.

\section{The expressive power of $\QtNaamen$.} 

As already mentioned several times before, $\QtNaamen$ is a very simple model category. Intuitively, $\QtNaamen$ should be much simpler than set theory. To formulate this intuition somewhat more precisely, we observe that: 

\begin{lem}
	Let $\bf V$ be the universe of set theory and $\sigma$ a bijective class function on $\bf V$. For a class $X\subseteq \bf V$ let $\tilde \sigma(X)=\{\{\sigma(a):a\in x\}:x\in X\}$. 
	Then $\tilde \sigma: \QtNaamen\lra \QtNaamen$ is a bijective functor on $\QtNaamen$. Moreover, $\tilde \sigma$ preserves the model structure of $\QtNaamen$. 
\end{lem}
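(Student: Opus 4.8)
The plan is to factor everything through the operation on \emph{sets} induced by $\sigma$, and then observe that this operation is compatible with every piece of set-theoretic data entering the construction of $\QtNaamen$, so that $\tilde\sigma$ is literally an automorphism of the labelled category. First I would fix notation: for a set $x$ write $\sigma[x]:=\{\sigma(a):a\in x\}$, so that $\tilde\sigma(X)=\{\sigma[x]:x\in X\}$ and $\tilde\sigma$ acts on $\Ob\StNaamen$ (the collection of all classes). The initial step is then to record the elementary properties of $x\mapsto\sigma[x]$ that follow from $\sigma$ being a bijective class function: it is a bijection from sets to sets with inverse $x\mapsto\sigma^{-1}[x]$ (so $\tilde\sigma$ is a bijection of $\Ob\StNaamen$ with inverse $\widetilde{\sigma^{-1}}$); it sends $\emptyset$ to $\emptyset$; it preserves and reflects $\subseteq$; it commutes with $\cap$; it preserves $\card$; and $B'\mapsto\sigma[B']$ is a bijection from the subsets of $y$ onto the subsets of $\sigma[y]$ preserving cardinality, finiteness and the relation ``$B'$ is contained in $y$ up to finitely many elements''. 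All of these are immediate from the injectivity and surjectivity of $\sigma$.

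Next I would use the $\subseteq$-preservation to handle the underlying category. Recall that in $\StNaamen$ there is a (necessarily unique) arrow $X\to Y$ exactly when every $x\in X$ is contained in some $y\in Y$; by the first step this condition holds for $X,Y$ if and only if it holds for $\tilde\sigma(X),\tilde\sigma(Y)$. Hence $\tilde\sigma$ carries arrows to arrows and non-arrows to non-arrows, and — since arrows are unique — trivially respects composition and identities. Thus $\tilde\sigma:\StNaamen\to\StNaamen$ is an isomorphism of categories; in particular it is a functor, bijective on objects and on morphisms, and it automatically preserves all existing limits and colimits.

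The last, and only substantive, step is preservation of the labelling. Here I would invoke Proposition~\ref{combinatorics}: each of the conditions defining $(f),(wf),(wc),(c),(w)$ on an arrow $\CA\to\CB$ quantifies only over members of $\CA\cup\{\emptyset\}$ and of $\CB$, over finite and over arbitrary subsets of members of $\CB$, and over finite chains of members of $\CB$, and it uses only the relations $\subseteq$, $\cap$, ``$\card\,B'\le\card(A\cap B)+\aleph_0$'', ``finite'' and ``contained up to finitely many elements''. Since $\sigma[\,\cdot\,]$ matches all of these up bijectively (by the first step), each labelling condition holds for $\CA\to\CB$ iff it holds for $\tilde\sigma(\CA)\to\tilde\sigma(\CB)$; applying the same reasoning to $\sigma^{-1}$ gives the converse, so $\tilde\sigma$ and $\widetilde{\sigma^{-1}}$ preserve all five labels. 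Since cuteness (Definition~\ref{Qt}) is defined purely by a lifting property of $(c)_0$-, $(wf)$- and unlabelled arrows, it is preserved in both directions as well; hence $\tilde\sigma$ restricts to a label-preserving isomorphism of $\QtNaamen$, which is precisely the assertion that it is a bijective functor preserving the model structure.

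I do not expect a genuine obstacle: morally the statement just says that a bijection of the universe induces an automorphism of any structure canonically built from $({\bf V},\in)$. The only point asking for a little care is the bookkeeping in the last step — lining up, under $\sigma[\,\cdot\,]$, the ranges of the quantifiers appearing in Proposition~\ref{combinatorics}, and in particular checking that the labels $(c)_0,(wc)_0$ inherited from ${\rm Sets}^{\subseteq}$ (which are defined via $\card$ and finiteness of set-theoretic differences) go across — which is exactly the content of the cardinality and finiteness clauses of the first step.
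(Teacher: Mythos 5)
Your proof is correct and follows essentially the same route as the paper's: the paper also notes that $\tilde\sigma$ is a functor because class functions send classes to classes, and declares preservation of the model structure to be an immediate corollary of Proposition~\ref{combinatorics}, which is exactly the verification you carry out in detail. Your additional care in checking that cuteness (Definition~\ref{Qt}) is preserved, so that $\tilde\sigma$ genuinely restricts to $\QtNaamen$, fills in a point the paper leaves implicit.
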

\begin{proof}
 Because $\sigma$ is a class function, if $X$ is a class so is $\tilde\sigma(X)$ (hence, $\tilde \sigma$ is indeed a functor from $\QtNaamen$ to itself). The only non-trivial part is that $\tilde \sigma$ preserves the model structure, which is an immediate corollary of Proposition \ref{combinatorics}. 
\end{proof}

Observe that in ZFC, given a set $S$, any $\sigma\in \rm{Sym}(S)$ extends to a class-bijection of $\bf V$  by setting $\sigma(x)=x$ for $x\not\in S$. Therefore, the last lemma proves that the model structure on $\QtNaamen$, while it must recognize the subset relation, does not respect - in a strong sense, the membership relation. For example, for a set $X$ the set theoretic operation $X\mapsto \{X\}$ is not respected by $\QtNaamen$, as can be inferred from the existence of  an automorphism exchanging $\{\{\0\}\}$ with $\{\{a\}\}$ (for any set $a$).

Even more trivially, since for any set $S$ we have $\{S\}\longleftrightarrow \mathbb P(S)$, we see that $\QtNaamen$ cannot distinguish $\{S\}$ from the power set of $S$. Thus, despite of the fact that $\QtNaamen$ was constructed specifically to model the notion of equi-cardinality, it does so with limited success. Moreover, the notion of a set being a singleton is also a notion unknown to $\QtNaamen$, as shows the above example. 

In order to extract meaningful information from the model category $\QtNaamen$ we can - as is standard in mathematics - beside studying the structure of $\QtNaamen$ itself, study functors (and other ``natural'' set theoretic functions) from $\QtNaamen$ to other categories, and vice versa. The next, section, for  example, is dedicated to the study of the cardinality function (not functor) $\card: \QtNaamen \lra On^\top$. In the present section we perform easier computations, showing that by imposing a little extra ``natural'' set theoretic structure on $\QtNaamen$, more information can be obtained.

\subsection{Ordinals} \label{ordinals}
The first example we consider is more easily computed in $\StNaamen$. The computations performed in this sub-section can be readily adapted to $\QtNaamen$ (with minor modifications), however, we were not able to find a natural set theoretic interpretation of these computations in the setting of $\Qt$. 

Consider $Sets^-$, the full sub-category of $\StNaamen$, whose objects are precisely those objects of $\StNaamen$ which happen to be sets. Consider the class function $S\mapsto S\cup \{S\}$ defined on $Sets^-$ (in fact, restricted to the category $Sets^-$ this is a functor). Indeed, an object of $\StNaamen$ is a set precisely if the operation $S\mapsto S\cup \{S\}$ is defined (in which case, of course, $S\lra S\cup\{S\}$ is a morphism in $\StNaamen$, and therefore also in $Sets^-$). Let us label those arrows by $(s)$. 

Observe that the function $S\mapsto S\cup \{S\}$ on $Sets^-$ allows us to define \emph{transitive} sets, namely:  a set $S$ is transitive precisely when $S\lra \{S\}$, or equivalently, if $S\cup \{S\}\lra S$, i.e., when the arrow $S\xrightarrow{(s)} S\cup \{S\}$ is invertible. Indeed, $S\lra \{S\}$ if and only if $s\subseteq S$ for all $s\in S$, if and only if $S$ is transitive. Thus, $S\in \Ob\StNaamen$ is an ordinal if and only if $S\xrightarrow{(s)} S\cup \{S\}\lra S\lra On$, where $On$ is the class of ordinals. We do not know whether the class $On$, as an object of $\StNaamen$ is definable (in some reasonable sense) in $\StNaamen$, even when augmented by the $(s)$-labelling. We point out however, that at least on the face of it, since the membership relation is not recoverable in $\StNaamen$, isolating the object $On$ in $\Qt$ allows us only to identify those objects of $\Qt$ all of whose members are ordinals, but not necessarily ordinals themselves. 

Note also that our $(s)$-labelling allows us only to identify arrows $S\lra S\cup \{S\}$. Given such an arrow, the object $\{S\}$ can be recovered as the complement of $S$ in $S\cup \{S\}$, i.e. it is the unique object whose direct limit (in $\StNaamen$)  with $\{S\}$ is $S\cup \{S\}$ and whose inverse limit with $S$ is $\0$. Of course, the arrow $\{S\}\lra S$ never exists, as it would imply that $S\subseteq s$ for some $s\in S$, so that $s\in s$ contradicting the regularity axiom of ZFC. 

In addition, by Proposition \ref{combinatorics}, if $S$ is an ordinal then $\0\rightwc S$ if and only if $S\le \aleph_0$ and $\0\rightc S$ if and only if $S\le \aleph_1$. So these two cardinals can also be recovered in $\StNaamen$ (with the function $S\mapsto S\cup \{S\}$), as the direct limits of the classes of (wc) and (c)-arrows respectively. This is, of course, not surprising, since $\StNaamen$ was constructed to model the notions of finiteness and countability. 

Of course, an ordinal $\alpha$ is limit precisely when $\beta\cup \{\beta\}\lra \beta \cup \{\beta\cup \{\beta\}\} \rtt \alpha \lra \top$ for all $\beta\in On$. It is a cardinal, precisely when for any ordinal $\beta$, if $\{\beta\}\rightc \{\alpha\}$ then $\{\alpha\}\rightc \{\beta\}$, which can be written as $\0\lra \{\alpha\}\rtt \{\beta\}\rightc \{\alpha\}$. Finally, $\alpha$ is a regular cardinal precisely when (it is a cardinal and) $\alpha\rightwf   \{\alpha\}$. To see this last claim, recall that, by construction, $\alpha\rightwf \{\alpha\}$ if and only if $\{A\}\rightc \{B\}\rtt \alpha\lra \{\alpha\}$ for all $\{A\}\rightc \{B\}$. But for sets, $\{A\}\rightc \{B\}$ if and only if $\card A+\aleph_0=\card B+\aleph_0$. So the lifting property defining the (wf)-arrows assures that for any $B\subseteq \alpha$, if some $A\subseteq B$ of the same (infinite) cardinality satisfies  $\{A\}\lra \alpha$ then $\{B\}\lra \alpha$. But $\{B\}\lra \alpha$ implies that there exists $\beta<\alpha$ such that $B\subseteq \beta$, so $B$ is bounded in $\alpha$. The other direction works in a similar way. 

As explained above, the operation $S\mapsto S\cup \{S\}$ is ``external'' to $\QtN$ (or $\StNaamen$). As a side remark to this subsection we point out that some traces of it can be recovered in a more ``geometric'' way within these labelled categories. Consider, for example, the property $A=\{\{a\}\}$ for some set $a$. It is easy to see that if $A$ is of this form then $\0\lra Z\lra A$ implies that either $\0\cong Z$ or $A\cong Z$. Conversely, if any decomposition $\0\lra Z\lra A$ is such that $\0\lra Z$ is an isomorphism or $Z\lra A$ is an isomorphism then $A=\{\{a\}\}$ for some set $A$. So we define, 
\begin{defn}
	An arrow $X\lra Y$ is \emph{indecomposable} if whenever $X\lra Z\lra Y$ either $X\lra Z$ is an isomorphism or $Y\lra Z$ is an isomorphism.
\end{defn}
With this definition the above observation can be stated as: $A$ is of the form $\{\{a\}\}$ for some set $a$ if and only if $\0\lra A$ is indecomposable. Note also that while we do not know whether indecomposability can be expressed as a lifting property it is obviously invariant under graph automorphisms of $\QtN$ (or $\StNaamen$), and can therefore be thought of as an intrinsic property of these (labelled) categories. 

It follows, for example, that with this in hand the property of  $A$ being \emph{isomorphic} to a singleton (i.e., $A\cong \{a\}$ for some set $a$) can be stated as: for all $X,Y$  if for all $\{\{a\}\}$, $\0\lra \{\{a\}\} \rtt X\lra Y$ then $X\lra Y\rtt A\lra \top$. It will suffice, of course, to show that this statement is equivalent to $A\cong\{\bigcup A\}$. By definition, $\0\lra \{\{a\}\}\rtt X\lra Y$ for all $\{\{a\}\}$ is equivalent to the statement that $\bigcup Y\subseteq \bigcup X$, in particular - for any set $A$ - $\0\lra \{\{a\}\} \rtt A\lra \{\bigcup A\}$ for all $a$.  Therefore, if for all $X,Y$ the assumption that $\0\lra \{\{a\}\} \rtt X\lra Y$ for all $a$ implies that $X\lra Y\rtt A\lra \top$ we can apply this with $A=X$ and $Y=\{\bigcup A\}$ to get $A\lra \{\bigcup A\}\lra A$, as required. The other direction is obvious, since the definition is invariant under changing $A$ with an isomorphic object, and therefore, we may assume that $A$ is a singleton.

\subsection{Cofinal and covering families.}
 A class $A$ is $\subseteq$-cofinal in $B$ if $A$ is a sub-class of $B$ and for all $b\in B$ there exists $a\in A$ such that $b\subseteq a$. In that case we also say that $A$ \emph{covers} $B$. By definition, this happens precisely when $B\lra A$. Since $A$ is a sub-class of $B$ we automatically get $A\lra B$, so that $A$ is cofinal in $B$ precisely when $A$ is isomorphic to $B$,  which happens if and only if $A\rightcwf B$. If $B$ is a set, the cofinality of $B$ is the minimal cardinality of a $\subseteq$-cofinal subset. In our notation, this can be expressed as: 
\[
\cof (B,\subseteq) = \min\{ \card B'\,:\, B'\rightcwf B\}. 
\]

For a class $B$ we have $\varnothing\rightc B$ if and only if every element of $B$ 
is at most countable, and  $\varnothing\rightwc B$ if and only if every element of $B$ 
is finite.  If $S$ is a set then $\0 \rightc B \rightwf \{S\}$ if and only if $B$ covers $[S]^{\le \aleph_0}$, i.e., if the set of countable subsets of $S$ is covered by $B$. In addition $\0\rightc [S]^{\le \aleph_0}\rightf \{S\}$ and $\0\rightwc [S]^{< \aleph_0}\rightwf \{S\}$. 
Combining all of the above, we get that for a cardinal $\kappa$: 
\[\cov(\kappa, \aleph_1,\aleph_1,2)=\min\{ \card B'\,:\, \0\rightc B'\rightwf 
\{\kappa\}\,\}=
\inf\{ \card B'\,:\, \0\rightc B' \leftarrow B'' \rightwf \{\kappa\}\,\}
\]
where $\cov(\kappa,\aleph_1,\aleph_1,2)$ is the minimal cardinality of a family of countable subsets of $\kappa$ covering $[\kappa]^{\le \aleph_0}$ (see Subsection \ref{covering} for more details). This shows that the covering number $\cov(\kappa,\aleph_1,\aleph_1,2)$ has a simple model categorical interpretation. The main goal of this paper is to show that, in fact, the right-most formula in the above equation is not only a simple translation of this set theoretic notion, but arises naturally from a model categorical study of $\Qt$. It is the \emph{co-fibrantly replaced left-derived functor of the cardinality function from $\Qt$ to $On^\top$}. This will be explained in detail in the next section.

\def\rightcwf{\xrightarrow{(wcf)}} 

\subsection{Some non-set theoretic concepts}
We conclude with a few simple non-set theoretic statements that can be expressed in $\Qt$. Consider, for example, $\CN$ a monster model of some first order theory $T$. For a cardinal $\beta$ let $\CN_{\beta}$ be the set of all elementary sub-models of $\CN$ of cardinality at most $\beta$. Then $\CN_\beta\rightwf \{\CN\}$ is the statement that the Lowehnheim-Skolem number of $T$ is at most $\beta$. Namely, it states that every subset of $N$ of cardinality at most $\beta$ is contained in a model of size at most $\beta$. In particular, if $T$ is countable then $\0\rightc \CN_{\aleph_0} \rightwf \{\CN\}$ is the statement that every countable set is contained in a countable model. Of course, the objects $\CN_\beta$ do not seem to be endemic to the model categorical setup.  


Recall that if $X$ is a topological space, then a set $A\subseteq X$ is closed if and only if $\acc(A)\subseteq A$, namely, if $A$ contains all its accumulation points. Thus,  a topological space can be given (instead of giving a collection of closed sets) by giving,  to any subset $S$ of $X$ the collection $\acc(S)$.  Therefore, a topological space $X$ gives rise to a functor $\acc:\QtNaamen\lra Sets^-$ by $\mathcal S \mapsto \{\acc(S\cap X): S\in \mathcal S\}$, and the topology on $X$ can be recovered from the functor $\acc$ only - namely, this gives a purely category theoretic definition of the topology.

\section{ Functors and derived functors.}

The idea of "forgetting structure" is, of course, a central theme in
mathematics. The pigeon-hole principal is about forgetting all the
structure but  cardinality, the dimension of algebraic extensions of
fields is defined by forgetting the field structure on the larger
field, keeping only the linear structure etc. In some sense, one could
argue that algebra and topology (when applied in the solution of
mathematical problems in other fields) act as powerful tools for
forgetting irrelevant information. In homotopy theory, it is common to
forget information that is not homotopy invariant. Thus, e.g., 
interesting homology theories are obtained by restricting scalars - the  
functor taking $R$-modules to $S$-modules for $S$ a sub-ring of $R$, 
or by considering the functor of global sections, which given a sheaf, forgets all its information but its global part. In model categories, 
Quillen's axiomatization of
homotopy theory, structure is forgotten by {\em deriving} functors, i.e. given a
functor $F$ the (left) derived functor $\LL F$ is the homotopy invariant
functor "closest" to $F$ (from the left). Here ``closest to $F$'' is interpreted
as being universal among the \emph{homotopy invariant} functors such that
there exists a natural transformation (also known as 
morphism of functors) $G \rra F$, i.e., if $\LL F$  is the derived functor of $F$ there exists a natural 
transformation $\LL F \rra F$ and any natural transformation $G \rra F$
from a homotopy invariant functor $G$, 
factors uniquely via $\LL F \rra F$. 

Homotopy invariance is defined
with the help of the \emph{homotopy category}. The homotopy category, $\rm{Ho}\FC$, associated with a model category $\FC$ is the category obtained from $\FC$ by formally inverting all weak equivalence so that
weak equivalences, and only weak equivalences,  become isomorphisms in $\rm{Ho}\FC$. A functor $F$ from a model category $\FC$ is \emph{homotopy invariant} if it factors trough $\rm{Ho}\FC$. 

Another means of forgetting structure in a model category $\FC$ is obtained by restricting to the sub-category of co-fibrantions (i.e., by forgetting all arrows that are not (c)-labelled. Note that since a model category is a labelled category, if we want - in addition - the resulting category to have an initial object, we have to restrict ourselves further to the category of co-fibrant objects (namely those objects $X$ such that $\0\rightc X$), and whose only morphisms are co-fibrations. It turns out that some constructions are well behaved only when restricted to 
this category of co-fibrant objects. 

Observe that by axiom (M2), given a model category $\FC$ any object $X$ is isomorphic in the homotopy category $\rm{Ho}\FC$ to a co-fibrant object, $X_{(wf)}$ such that $\0\rightc X_{(wf)} \rightwf X$. Thus, from the homotopy category point of view, every object can be replaced with a co-fibrant object, a process known as the co-fibrant replacement. For example, in the 
model structure on the chain complexes or sheaves this corresponds
to replacing a module or a sheaf by its projective (resp. injective)
resolution before computing cohomology.

\subsection{Functors and derived functors in quasi partial orders.}

As a category, $\QtNaamen$ is extremely simple: it is \emph{posetal}, namely, arrows are unique whenever they exist (so, e,g,, all diagrams commute). We call such categories posetal, for an obvious reason:  the relation $X\le Y$ defined by $X\lra Y$ is a partial quasi order encoding 
fully the category structure. In this sub-section we explain the details of the discussion of the previous paragraphs in the special case of posetal categories. 

Recall that given two categories $\FC$ and $\FC'$ a \emph{covariant functor} (or simply a functor) is a mapping $F:=(F_1,F_2):(\Ob \FC,\Mor \FC)\lra (\Ob \FC',\Mor\FC')$ sending the arrow $X\xrightarrow{f} Y$ to the arrow $F_1(X) \xrightarrow{F_2(f)} F_1(Y)$ and respecting commutative diagrams (namely, $F_2$ respects the composition of arrows). If both $\FC$ and $\FC'$ are posetal a functor $F:\FC \lra \FC'$ is merely an order preserving mapping.

If $F,G$ are such functors between (arbitrary) categories, $\FC,\FC'$, let $\epsilon(X)\in \Mor\FC'$ be an arrow $F(X)\lra G(X)$ (if such a morphism exists).  The collection of arrows  $\{\epsilon(X) : X\in \Ob\FC\}\subseteq \Mor(\FC')\}$ is a \emph{natural transformation} from $F$ to $G$ if $\epsilon(X)$ is defined for all $X\in \Ob\FC$ and 
it preserves commutative diagrams, i.e. all  diagrams involving $F,G$ and $\epsilon$, that 
exits purely for formal reasons, are necessarily commutative; in this case it means only that
$G(f)\circ \epsilon(X)=\epsilon(Y)\circ F(f)$ for every $X\xrightarrow{f} Y\in \Mor\FC$: 

\begin{figure}[H]
\centerline{
 \xymatrix  @R=3pc @C=3pc{
& X \ar[r]^f \ar[dl] \ar[d]  & Y \ar[d] \ar[dr] \\
F(X) \ar@/_1pc/[rr]_{F(f)} \ar[r]^{\epsilon(X)} & G(X) \ar@/_1pc/[rr]_{G(f)} & F(Y) \ar[r]^{\epsilon(Y)} & G(Y)
}}
\caption{There are two possible paths from $F(X)$ to $G(Y)$. If $\epsilon$ is a natural transformation then 
the composition of morphisms along those two paths are the same.}
\end{figure}

For posetal categories $\FC, \FC'$ and functors $F,G: \FC\lra \FC'$ the arrow $\epsilon(X)$ exists if and only if $F(X)\le G(X)$. Since the collection of functors from $\FC$ to $\FC'$ is itself quasi-partially ordered by pointwise 
domination, namely, for $F,G:\FC\lra \FC'$ write
\[
 F\le_N G \iff F(X)\le_{\FC'} G(X) \text{ for all } X\in \Ob\FC. 
\]
Thus, given two functors $F,G:\FC\lra \FC'$ there is a natural transformation from $F$ to $G$ precisely when $F\le_N G$. Moreover, if such a natural transformation exists, it is unique. 
Thus, for posetal $\FC,\FC'$ the functors  $F,G:\FC\lra \FC'$ are \emph{naturally equivalent} precisely when $F\le_N G \le_N F$, which - by definition - happens precisely when $F(X)$ is isomorphic (in $\FC'$) to $G(X)$ for all $X\in \Ob\FC$. 

As mentioned in the introductory paragraphs of this section, homotopy theory is interested in data only up to ``homotopy equivalence''. In the context of  model categories, $\FC,\FC'$, this means that those functors $F:\FC\lra \FC'$ a homotopy theorist is interested in are the ones respecting homotopy equivalence, i.e., those mapping (w)-labelled arrows to (w)-labelled arrows. Given a functor $F:\FC\lra \FC'$ which is not necessarily homotopy invariant, we may want to replace $F$ with a homotopy invariant relative, and naturally, we want this relative to be as close to $F$ as possible. We will now explain how this is done when $\FC'$ carries a trivial model structure. 

Recall that any category $\FC'$ can be given a  model structure by labelling all arrows (cf) and identifying the label (w) with isomorphisms. Call such a model structure trivial, and observe that any functor between model categories carrying a trivial model structure is homotopy invariant. Now assume that $\FC, \FC'$ are model categories with $\FC'$ trivial. Then $F:\FC\lra \FC'$ is homotopy invariant precisely when it maps weak equivalences to isomorphisms. Since the homotopy category $\rm{Ho}\FC$ is the localization of $\FC$ at the class of weak equivalences, this means precisely that $F$ is homotopy invariant if an only if $F$ factors through $\gamma$, where $\gamma:\FC\lra \rm{Ho}\FC$ is the localization functor. 

Thus, under the same assumptions, given any functor $F:\FC\lra \FC'$, the homotopy invariant version of $F$  we are looking for can be identified with a a functor $\L^\gamma_F: \rm{Ho}\FC\lra \FC'$ such that the composition $\L_F^\gamma\circ \gamma$ is ``closest'' to $F$. Formally, this last condition is interpreted as the existence of a natural transformation from $\L^\gamma_F\circ \gamma$ to $F$, and such that if $G:\rm{Ho}\FC\to \FC'$ is any functor such that there is a natural transformation from $G\circ \gamma$ to $F$ then there is also a natural transformation from $G$ to $\L^\gamma_F\circ \Gamma$.
In the case that $\FC, \FC'$ are posetal this reduces to: 
\begin{definition}\label{n:inf-functor}
 Let $\FC$ be a posetal model category, and $\FC'$ any posetal category. Given a functor $F:\FC\to \FC'$ the left derived functor of $F$ is given by
\[
 \L^\gamma F(X') =\inf\{ F(X) : X' \leq_{\rm{Ho}\FC} \gamma (X),\ X\in \Ob \FC\}. 
\]
In particular, the left derived functor exists if and only if the right hand side is well-defined.
\end{definition}

Observe that the definition of a derived functor $F:\FC\lra \FC'$ depends on $\FC$ being a model category only in as much as the homotopy category $\rm{Ho}\FC$ is the category through which we want to factor (an approximation) of $F$. In general, given a category $\mathfrak D$ and a functor $\gamma:\FC\lra \mathfrak D$,  we can still (left) derive any functor $F:\FC\lra \FC'$ with respect to the functor $\gamma$. If all categories involved are posetal, the formula in the above definition still gives the left-derived functor with respect to $\gamma$. 

\begin{example}[co-limits of commutative diagrams as derived functors.]
 A commutative diagram $D:A\lra \mathfrak B$ is a functor
where $A$ is a (usually finite) partial order. Let $\gamma:A\lra \{\bullet\}$ be the functor 
sending $A$ to the category $\mathfrak P=\{\bullet\}$ consisting of a single object 
and a single morphism. By definition $\L^\gamma D$ 
is an object of $\mathfrak B$. For simplicity, we abuse notation and denote this object $\L^\gamma_D$.  We claim that $\L^\gamma_D$ (if it exists) is the colimit of the diagram $D$. Indeed, given a
a functor $G:\{\bullet\} \lra \mathfrak B$, a natural transformation from $G\circ \gamma$ to $D$ is a collection of arrows $\{\epsilon(a):a\in\Ob A\}$ such that $G(\bullet)\xrightarrow{\epsilon(a)} D(a)$ and such that if $a\xrightarrow{f} b$ (some $a,b\in \Ob A$) then $\epsilon(b)=D(f)\circ \epsilon(a)$. The universality property of $\L^\gamma_D$ implies that given such a functor $G$ and a natural transformation as above, there exists a unique natural transformation from $G$ to $\L^\gamma_D$, i.e., there exists a unique arrow from $G(\bullet)$ to $\L^\gamma_D(\bullet)$, making the whole diagram commute. The latter is the defining property 
of the (direct) co-limit of $D$. 

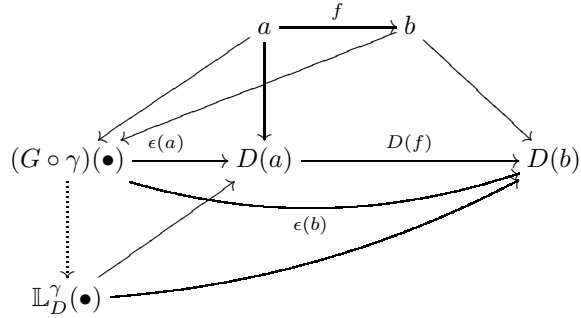
\begin{figure}[H]

\centerline{
 \xymatrix  @R=3pc @C=3pc{
& a \ar[r]^f \ar[dl] \ar[d]  & b \ar[dll] \ar[dr] \\
(G\circ \gamma) (\bullet) \ar@{.>}[d] \ar@/_1.5pc/[rrr]_{\epsilon(b)} \ar[r]^{\epsilon(a)} & D(a) \ar[rr]^{D(f)} &  & D(b) \\
\L^\gamma_D(\bullet) \ar[ur] \ar@/_1pc/[urrr] 
}}
\caption{$\L^\gamma_D$ is the direct colimit of the diagram $D$}
\end{figure}

Similarly, the co-limit of a commutative diagram $D$ is the right derived functor of the diagram 
viewed as a functor. 

Note that the localization functor $\gamma:A\lra \{\bullet\}$, up to
equivalence, corresponds to the degenerate model structure on $A$ where every
arrow is labelled $(wc)$ and only isomorphisms are labelled $(f)$ (or, alternatively, every arrow is labelled $(wf)$ and only isomorphisms are labelled
$(c)$). For this model structure every arrow in the homotopy category of $A$ is 
an isomorphism and this category is equivalent to $\{\bullet\}$.
\end{example}

\begin{example}[A category theoretic view of ordered sets\footnote{The authors thank Marco Porta 
for these observations.} ]
We have seen that (quasi) partially ordered sets can be viewed as a category. In this way any subset of such a partial order determines uniquely a commutative diagram. The colimit of such a diagram is the least upper bound of its vertices, and its limit is the greatest lower bound - if they exist.  

Say that the limit or colimit of  a diagram $D$ is {\em degenerate} if it is isomorphic to one of its vertices. It is then easy to check that a partially ordered set is {\em linear} if and only if either all finite limits or all finite colimits (exist and) are degenerate, (and, obviously, one implies the other, i.e. the limit of every finite diagram is degenerate if and only if the colimit of each degenerate diagram  is degenerate). A linearly  ordered set 
is {\em complete} if and only if every diagram has a limit if and only if every diagram has a colimit. A partially ordered class 
is {\em well-ordered} if and only if limits always exist and are always degenerate.  

We conclude with  a category theoretic characterisation of degenerate limits. 
 Obviously, any functor preserves all degenerate limits and colimits. We will show that, conversely, a limit preserved under all functors is degenerate. More precisely, if $A$ is a quasi-partial order, 
$D$ a commutative diagram, if for any functor $F:A\lra A'$ it holds that 
$\lim F(D)$ and $F(\lim D)$ are isomorphic (and in particular the former exists if and only the latter does),
then the limit of $D$ is degenerate. Indeed, To see this, consider the category $A'$, $\Ob A'=\Ob A \cup L$, and for $X,Y\neq L$ $\Mor_{A'}(X,Y)=\Mor_{A}(X,Y)$,
$\Mor_{A'}(X,L)=\Mor_A(X,\lim D)$. Finally, set $\Mor_{A'}(L,X)\neq\varnothing$ if and only if 
for some vertex $D_i$ in $D$ it holds $\Mor_A(D_i,X)$. We leave it is as a trivial exercise to check that $A'$ is indeed a (posetal) category and that $L$  and $\lim D\in \Ob A'$ are not isomorphic unless the limit of $D$ is degenerate.
 
 In particular, a quasi partially ordered class $A$ is well-ordered if and only if for every diagram $D$ and 
every functor $F:A \lra B$, it holds $F(\lim D) =\lim F(D)$, i.e. $\lim D$ exists if and only if $\lim F(D)$ 
exists, and if they both exists, the formula holds.
\end{example}

For what follows it is crucial to observe that the formula of Definition \ref{n:inf-functor} is meaningful whenever we are given a function $F:\FC\lra \FC'$ - not necessarily a functor. Thus, for example. if $\FC'$ is well ordered or if $\FC'$ is Dedekind complete then any function $F:\FC\lra \FC'$ can be derived from the left. 

Let $On^\top$ be the posetal category of ordinals (i.e., given ordinals $\alpha,\beta$  there is an arrow $\alpha\lra \beta$ if and only if $\alpha\le \beta$) augmented by a terminal object $\top$. The following definition sums up the discussion of the previous paragraphs, with an extra edge: 

\begin{definition}\label{crldf}
  Let $\FC$ be a posetal model category. For a {\sl function} $F: \FC \lra On^\top$ we let the \emph{co-fibrantly replaced left derived functor} of $F$ be: 
\[
\Lc F (X) = \min \left\{ F(Y): \vcenter { \xymatrix{
& X_1 & & X_3 & & X_n \ar@{-->}[r] & Y \\
X \ar@{-->}[ur] & & \ar@{-->}[ul]_{(w)} X_2 \ar@{-->}[ur] & & \ar@{-->}[ul]_{(w)} \cdots \ar@{-->}[ur] & & \;\perp \ar@{-->}[u]_{(c)}
} } \right\} 
\]
where the minimum is taken over all finite sequences of the same form. 
\end{definition}

Observe that given $X,Y\in \Ob\FC$ a sequence of the form 
\begin{equation*}\tag{$\diamond$}
X\lra X_1 \leftw X_2 \lra X_3\leftw \dots \lra X_n \lra Y 
\end{equation*}
  exists for some $n\in \mathbb N$ if an only if there exists $g\in \Mor\rm{Ho}\FC$ such that $X\xrightarrow{g} Y$. Thus, we can write: 
\begin{equation*}\tag{$\clubsuit$}
 \Lc F(X)=\min\{F(Y): X\lra_{h} Y, \perp\rightc Y\}, 
\end{equation*}
where $X\lra_h Y$ means that that there is an arrow from $X$ to $Y$ in the homotopy category. Using this notation we immediately see that $\Lc F$ is a homotopy invariant (because it factors through the homotopy category) \emph{functor} (because $X\lra Y$ implies that $\Lc F(X)\ge \Lc F(Y)$) depending only on the values $F$ takes on co-fibrant objects. 

Note that if $F:\FC\lra On^\top$ is a functor then for any $X\in \Ob\FC$, letting $\perp \rightc X_{(wf)} \rightwf X$ we see that $F(X_{(wf)})\le F(X)$, but $\Lc F$ is functorial so $\Lc F(X_{(wf)}) \le \Lc F(X)$. By what we have just said $X_{(wf)}\lra X$ implies that $\Lc F(X_{(wf)})\ge \Lc F(X)$, so - in the case $F$ is a functor: 
\[
 \Lc F(X)=\min\{F(Y): X\lra_h Y\}=\L^\gamma\circ \gamma(X).
\]
Thus, the co-fibrantly replaced left-derived functor generalises the definition of (left) derived functors (but the two definitions need not agree if $F$ is not a functor). 

\begin{rem}\label{coherence}
Let $\FC$, $\FC'$ be equivalent model categories, witnessed by the functors $F:\FC\to \FC'$ and $G:\FC'\to \FC$. Assume that $f:\FC\to On^{\top}$ is any function, then there is no reason to expect that $\Lc f(G(Y)))=\Lc (f\circ G)(Y)$. This is, of course, not the case if $f$ is a functor.  In other words, the price for deriving  arbitrary functions is that the process is not invariant under equivalence of model categories. This is discussed further in Section \ref{further}. 
\end{rem}

We will discuss the co-fibrant replacement a little more later on. Our discussion up to this point should convince the reader that - at least for functors - the left derived co-fibrant replacement is a natural means for forgetting non-homotopy-invariant information.

\subsection{Examples of derived functors.}

\subsection{The covering number of $\aleph_\omega$ as a value of a derived functor.} \label{covering}
In this sub-section we prove the main result of this paper, we show that the covering number of $\aleph_\omega$ is the value of the co-fibrantly replaced left-derived functor of the cardinality function $\card: \QtNaamen \lra On^\top$. 
Cardinality is certainly one of the most natural functions anyone studying set theory is bound to run into. Possibly, it  is the simplest set theoretic function not arising directly from purely logical operations (in the way the union and intersection operations do). To adapt the notion of cardinality to our setting we define a function $\card: \QtNaamen \longrightarrow On^\top$ such that  $X \longmapsto\card(X)$ if $X$ is a set and $X\longmapsto \top$ otherwise. 
Observe that cardinality is not a functor on $\QtNaamen$. Indeed 
$ \{X\}  \longrightarrow \mathcal P(X) \longrightarrow  \{X\} $ but $\card (\{X\})=1 < \card (\mathcal P(X)) > 1$ for all non-empty $X$. Similarly,

\[
\{\{\bullet_1\},
\{\bullet_1,\bullet_2\}\} \xrightarrow{(wcf)}  \{\{\bullet_1,\bullet_2\}\}
\]
is an isomorphism but 
$2=\card \{\{\bullet_1\},\{\bullet_1,\bullet_2\}\} > \card \{\{\bullet_1,\bullet_2\}\}=1$ 
are non-isomorphic.

However, cardinality is a natural function and the homotopy ideology discussed in the introduction to this section suggests (despite of the fact it is not a functor) that we try and find a homotopy invariant
approximation to cardinality. As discussed above, any function from a model category to $On^\top$ can be derived. Unfortunately, as we will see later, deriving the cardinality function (according to the formula in Definition \ref{n:inf-functor}) gives us an uninteresting result. So we take the co-fibrantly replaced left derived functor of cardinality, as in Definition \ref{crldf}. The resulting function,  $\Lc\card$, can be viewed, as homotopy theory yoga suggests, as the homotopy invariant version of cardinality. 

Interestingly, the homotopy invariant version of cardinality has a purely set theoretic interpretation ($\Lc\card(\{\aleph_\alpha\})=\cov(\aleph_\alpha,
\aleph_1,\aleph_1,2)$  - where $\cov(\aleph_\alpha,
\aleph_1,\aleph_1,2)$ is the \emph{covering number} to be discussed in detail below). The construction of this function uses fairly little set theory: the only notions needed in an essential way to construct it are $A\subseteq B$, finiteness, countability and infinite equi-cardinality. Thus, $\Lc\card$ will remain meaningful in any set theory where those notions keep their meaning.  More importantly, $\Lc \card$ is considerably tamer, say, than the power function, and can be effectively bounded in ZFC (but these are deep results in PCF theory, and we do not claim that they can be identified, let alone proved - using homotopy theoretic tools). For example, Shelah's famous inequality 
\[
(\aleph_\alpha)^{\aleph_0}\leq \cov(\aleph_\alpha,\aleph_1,\aleph_1,2) + 2^{\aleph_0} 
\]
can be interpreted (paraphrasing Shelah) as a decomposition of $(\aleph_\alpha)^{\aleph_0}$ into a ``noise'' component (wild and highly independent on ZFC) and a ``homotopy invariant'' part, which can be well understood within ZFC. 

Another curious feature of the function $\Lc\card$ is that it is non-trivial only on singular cardinals. Thus, from a homotopy theoretic view point singular cardinals present themselves almost immediately as a natural object of interest in set theory (compare with, \cite{KojmanHistory}, describing the early and spectacular appearance of singular cardinals on the mathematical stage, and their immediate disappearance for several decades). We now proceed with a detailed exposition of the discussion of the last paragraphs. 

By definition, the \emph{covering number}

\[
\cov(\lambda,\kappa,\theta,\sigma)
\]
is the {\em least size of a
family $X\subseteq [\lambda]^{<\kappa}$
of subsets of $\lambda$ of cardinality less than $\kappa$,
such that every
subset of  $\lambda$ of cardinality less than $\theta$,
lies in a union of less than $\sigma$ subsets in $X$.}

\begin{thm} \label{7.2}(the covering number as a derived functor). 
{\em For any cardinal $\lambda$ 
\[
\L_c\card (\{\lambda\}) =cov(\lambda,\aleph_1,\aleph_1,2) 
\]
}
\end{thm}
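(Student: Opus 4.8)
The plan is to compute each side from descriptions already available and show they agree. By formula $(\clubsuit)$,
\[
\Lc\card(\{\lambda\})=\min\{\,\card Y:\ \{\lambda\}\lra_h Y,\ \0\rightc Y\,\},
\]
the minimum ranging over cofibrant $Y$ admitting a morphism from $\{\lambda\}$ in $\mathrm{Ho}\,\QtNaamen$; and Subsection~\ref{covering} already identifies $\cov(\lambda,\aleph_1,\aleph_1,2)$ with $\min\{\card B':\ \0\rightc B'\rightwf\{\lambda\}\}$, i.e.\ with the least cardinality of a family of countable subsets of $\lambda$ covering $[\lambda]^{\le\aleph_0}$. So it suffices to show that these two minima coincide, and I would do this by a double inequality (one may assume $\lambda$ infinite, the finite case being degenerate).

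For $\le$: take a family $B'$ of countable subsets of $\lambda$ covering $[\lambda]^{\le\aleph_0}$ with $\card B'=\cov(\lambda,\aleph_1,\aleph_1,2)$. By the cited fact $\0\rightc B'\rightwf\{\lambda\}$, and since a $(wf)$-arrow is a weak equivalence, $\{\lambda\}$ and $B'$ become isomorphic in $\mathrm{Ho}\,\QtNaamen$, so in particular $\{\lambda\}\lra_h B'$; as $B'$ is cofibrant it is admissible in the minimum defining $\Lc\card(\{\lambda\})$, whence $\Lc\card(\{\lambda\})\le\card B'$.

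For the reverse inequality — the substantive direction — let $Y$ be cofibrant with $\{\lambda\}\lra_h Y$; I must show $\card Y\ge\cov(\lambda,\aleph_1,\aleph_1,2)$. First note that $[\lambda]^{\le\aleph_0}$ is a cofibrant replacement of $\{\lambda\}$: all its members are countable, so it is cofibrant, and Proposition~\ref{combinatorics} gives $[\lambda]^{\le\aleph_0}\rightwf\{\lambda\}$; hence $[\lambda]^{\le\aleph_0}\lra_h Y$. Choose a fibrant replacement $Y\rightwc RY$ in $\QtNaamen$ (concretely one may take $RY=\{x:\exists y\in Y\cup\{\0\},\ x\setminus y\text{ finite}\}$) and use the standard fact that a morphism in the homotopy category from a cofibrant object to a fibrant object comes from an actual morphism (homotopy of morphisms being trivial in a posetal category); this yields an arrow $[\lambda]^{\le\aleph_0}\lra RY$ in $\QtNaamen$. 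Combining $[\lambda]^{\le\aleph_0}\lra RY$ (every countable $C\subseteq\lambda$ lies in a member of $RY$) with the combinatorial description of the $(wc)$-arrow $Y\rightwc RY$ (every member of $RY$ is, up to finitely many elements, contained in a member of $Y\cup\{\0\}$) gives: every countable $C\subseteq\lambda$ satisfies $C\setminus y$ finite for some $y\in Y\cup\{\0\}$. From this ``almost-covering'' property I extract, for $\lambda$ uncountable, (i) $\card Y\ge\lambda$ — otherwise $\bigcup_{y\in Y}(y\cap\lambda)$ has size $<\lambda$ and a countably infinite $C\subseteq\lambda$ disjoint from it has $C\setminus y=C$ for all $y\in Y\cup\{\0\}$ — and (ii) the family $\{y\cap\lambda:y\in Y\}\cup\{\{\alpha\}:\alpha<\lambda\}$ consists of countable subsets of $\lambda$, has cardinality $\le\card Y+\lambda=\card Y$ by (i), and covers $[\lambda]^{\le\aleph_0}$ by \emph{finite} unions, so $\card Y\ge\cov(\lambda,\aleph_1,\aleph_1,\aleph_0)$. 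Finally, replacing each finite subfamily by its union shows $\cov(\lambda,\aleph_1,\aleph_1,\aleph_0)=\cov(\lambda,\aleph_1,\aleph_1,2)$, giving $\card Y\ge\cov(\lambda,\aleph_1,\aleph_1,2)$; the case $\lambda=\aleph_0$ is immediate ($\cov=1$, realised by $Y=\{\lambda\}$, and $\{\lambda\}$ is not isomorphic to $\0$ in $\mathrm{Ho}\,\QtNaamen$).

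The main obstacle is exactly this $\ge$ direction and, inside it, the detour through a fibrant replacement: objects of $\QtNaamen$ are rarely fibrant (no singleton is), so a homotopy-category morphism out of $\{\lambda\}$ need not be a short zig-zag landing in $Y$, and the $(wc)$-replacement $Y\rightwc RY$ recovers the covering property of $Y$ only up to finite ``noise''. Absorbing that noise is what forces the passage through $\cov(\lambda,\aleph_1,\aleph_1,\aleph_0)$ together with the auxiliary bound $\card Y\ge\lambda$; everything else is a direct translation through Proposition~\ref{combinatorics} and the formulas of Subsection~\ref{covering}.
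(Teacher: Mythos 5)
Your proof is correct, and its skeleton (double inequality; the $\le$ direction by exhibiting a covering family as a cofibrant object weakly equivalent to $\{\lambda\}$) matches the paper's. The $\ge$ direction, however, is executed differently at both of its key sub-steps. First, to extract the ``almost covering'' property (every countable $C\subseteq\lambda$ has $C\setminus y$ finite for some $y\in Y\cup\{\0\}$) the paper argues by hand: its Claim~I is an induction along the zig-zag $(\diamond)$ witnessing $\{\lambda\}\lra_h Y$, propagating the property through each forward arrow and each backward $(w)$-arrow via Proposition~\ref{combinatorics}. You instead invoke the standard Quillen fact that a $\mathrm{Ho}$-morphism from a cofibrant object to a fibrant one is realized by an actual morphism, applied to the explicit cofibrant replacement $[\lambda]^{\le\aleph_0}\rightwf\{\lambda\}$ and a fibrant replacement $Y\rightwc RY$; this is cleaner and makes the role of the model axioms visible, at the cost of leaning on the general theory (and you should either justify that your explicit $RY$ is cute, or simply appeal to (M2) for existence, which suffices). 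Second, to repair the finite ``noise'': the paper's Claim~II uses Lemma~35 of \cite{GaHa}, a uniformization statement producing a single finite set $C=\lambda\setminus\Lambda$ that can be adjoined to every member of (the trace of) $Y$ to yield a genuine covering family of the same cardinality. You avoid that lemma entirely by adjoining all singletons $\{\alpha\}$, $\alpha<\lambda$, observing $\card Y\ge\lambda$ so the cardinality is unchanged, landing on $\cov(\lambda,\aleph_1,\aleph_1,\aleph_0)$, and then closing under finite unions to get $\cov(\lambda,\aleph_1,\aleph_1,\aleph_0)=\cov(\lambda,\aleph_1,\aleph_1,2)$. This is more elementary and self-contained; the paper's route buys a sharper structural statement (one finite set corrects all members at once), which is what generalizes in the later variant for $\cov(\aa,\aB,\aB,2)$, where your singleton trick would no longer keep the members small. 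Your separate treatment of $\lambda\le\aleph_0$ is sensible; note only that for finite nonzero $\lambda$ the identity as literally stated is delicate on both sides, but that is an artifact of conventions, not of your argument.
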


\begin{proof}
 First, assume that $\CY$ is a covering family for $\lambda$ witnessing $\cov(\lambda,\aleph_1,\aleph_1,2)=\kappa$. Then, by definition of the covering number $\0\rightc\CY$. We claim that $\CY\rightw \{\lambda\}$, which will prove $\Lc\card({\lambda})\le \kappa$. By Proposition \ref{combinatorics} we only have to show that any countable subset of $\lambda$ is contained in an element of $\CY$, which is merely the definition of $\CY$ being a covering family. To prove the other inequality, observe that: \\

\noindent{\bf Claim I}:  If $\CX\lra_h\CY$ in $\QtNaamen$, with $\CX:=\CX_0,\CX_1,\dots,\CX_n=:\CY$ witnessing it (as in  ($\diamond$)) then for every $i\le n$, every countable subset $L$ with  $\{L\}\lra \CX$ is contained, up to finitely many elements, in some $\{X\}\lra \CX_i$. 

\proof
For $X_0$ there is nothing to prove, and for $X_1$ this follows from the definition of $\CX\lra \CX_1$. For $\CX_2\rightw \CX_1$ this is a special case of Proposition \ref{combinatorics}, and as the condition is transitive, induction gives this observation. \qed$_{\text{Claim I}}$ \\

\noindent The proof of the theorem now follows from the following: \\

\noindent{\bf Claim II}: Let $\0\rightc \CY$ be such that $\{L\}\lra^* \CY$ for every countable set, $L\subseteq \lambda$. Then there exists a covering family $\CZ$ of $\lambda$ whose cardinality is at most that of $\CY$. \\ 

\proof Let $\CY_0$ be the inverse limit of $\CY$ and $\{\lambda\}$. Then $\card \CY_0\le \card \CY$ (by definition of the inverse limit in $\Qt$), and   $\0\rightc \CY_0$. By assumption,  $\0\rightc \CY_0\rightwc \lambda_{(wf)}$, where $\0\rightc \lambda_{(wf)}\rightwf \{\lambda\}$. By Lemma 35 of \cite{GaHa} there is a set $\Lambda$ and some $\CY'$ such that the following diagram is true in $\StNaamen$: 
\begin{figure}[H]
\centerline{
 \xymatrix  @R=3pc @C=3pc{
& \CY_0 \ar[d]|-{(wc)} & \CY' \ar[l] \ar[r]|-{(wf)} & \{\Lambda\} \ar[dl]|-{(wc)}  \\
\0 \ar@/_1pc/[r]|-{(c)} \ar@/_1pc/[urr]|-(0.7){(c)} &  \lambda_{(wf)} \ar[r]|-{(wf)} & \{\lambda\}  
}}
\end{figure}
Since $\QtNaamen$ is a full sub-category, to show that this diagram is also true in $\QtNaamen$ it suffices to verify that all objects in the diagram are objects in $\QtNaamen$.  This amounts to checking that $\{\lambda\}$ and $\CY'$ are in $\QtNaamen$, which is obvious since all singleton sets and all co-fibrant objects are. 

Let $\CY''$ be the inverse limit of $\CY_0$ and $\{\Lambda\}$, by Claim 33 of \cite{GaHa} (and this also follows readily from the definition), this is simply $\{y\cap \Lambda: y\in\CY_0 \}$. By definition of the inverse limit we get $\CY'\lra \CY''$. Since $\CY'\rightwf \{\Lambda\}$ it follows (e.g., by Proposition \ref{combinatorics}) that $\CY''\rightwf \{\Lambda\}$. Since all elements in $\CY'$ are countable so are all the elements in $\CY''$. By Proposition \ref{combinatorics} these two facts together mean precisely that $\CY''$ is a covering family for $\Lambda$. 

Finally, since $\{\Lambda\}\rightwc \{\lambda\}$ we get (again, using Proposition \ref{combinatorics}), that $\lambda\setminus \Lambda$ is a finite set, say, $C$. Let $\CZ:=\{y\cup C: y\in \CY''\}$. Then $\CZ$ is a co-fibrant object, and is therefore an object of $\QtNaamen$. All elements in $\CZ$ are countable, and every countable subset of $\lambda$ is contained in an element of $\CZ$. So $\CZ$ is a covering family for $\lambda$. Observe that $\card\CZ\le \card \CY''\le \card \CY_0=\card\CY$. Thus, $\CZ$ witnesses that $\Lc\card\ge \cov(\lambda,\aleph_1,\aleph_1,2)$. 
\qed$_{\text{Caim II}}$\\

\noindent This completes the proof of the theorem. 
\end{proof}

We conclude with a summary, in our notation, of some of Shelah's results concerning PCF bounds: 

\begin{thm}[Shelah]
The following inequalities are true in ZFC: 
\bi
\item[(i)]if  $\aa$ is regular cardinal, then
 \[
  \L_c(\{\aa\})=\L_c(2^{\aa})=\cov(\aa, \aleph_1,\aleph_1,2)=\aa
 \]
\item[(ii)]
 $\L_c(\{\aleph_\omega\})=\L_c(2^{\aleph_\omega})=\cov(\aleph_\omega,\aleph_1,\aleph_1,2)<\aleph_{\omega_4}
 $ \item[(iii)] If $\aleph_\delta$  is a singular cardinal such that
 $\delta<\aleph_\delta$, then 
$\L_c(\{\aleph_\delta\})=\L_c(2^{\aleph_\delta})=\cov(\aleph_\delta,\card
\delta^+,\card \delta^+,2)<\aleph_{\card\delta^{+4}}$ \item[(iv)] (Shelah's
Revised GCH). If $\theta$  is a strong limit uncountable cardinal, then for
every $\lambda \geq \theta$, 
$\kappa_0 \leq\kappa<\theta$ 
$\lambda^{[\kappa]}=\lambda$

\ei \end{thm}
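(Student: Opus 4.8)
The plan is to treat the theorem as a chain of dictionary translations: by Theorem~\ref{7.2} and its variants, every $\L_c$-term on the left of (i)--(iii) is literally a covering number, so each line becomes a statement about that covering number --- and all of those are theorems of Shelah, which I would import from \cite{ShCard} rather than reprove. Thus the work splits cleanly into a formal (homotopy-theoretic) part and a set-theoretic (PCF) part.

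First I would pin down the identifications. For every cardinal $\lambda$, Theorem~\ref{7.2} gives $\L_c\card(\{\lambda\})=\cov(\lambda,\aleph_1,\aleph_1,2)$. The equality $\L_c(2^{\lambda})=\L_c(\{\lambda\})$ costs no set theory: in Section~3 it is noted that the arrows $\{\lambda\}\lra\mathcal P(\lambda)$ and $\mathcal P(\lambda)\lra\{\lambda\}$ both exist, so $\mathcal P(\lambda)\cong\{\lambda\}$ in $\QtNaamen$; and, by the discussion following Definition~\ref{crldf}, $\L_c\card$ is a functor on $\QtNaamen$, hence invariant under isomorphism, so it agrees on $\mathcal P(\lambda)$ and $\{\lambda\}$. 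Hence in (i) and (ii) the whole left-hand side collapses to $\cov(\lambda,\aleph_1,\aleph_1,2)$. For (iii) I would first establish the $\card\delta^{+}$-analogue of Theorem~\ref{7.2}: in the model structure $\Qt_{\card\delta^{+}}$ from the family $\Qt_\kappa$ --- the one built by replacing the pair (finite, countable) in the construction of $\QtNaamen$ by (of cardinality $\le\card\delta$, of cardinality $\le\card\delta^{+}$) --- the argument of Theorem~\ref{7.2}, i.e.\ Claims~I and~II, goes through verbatim with ``$\aleph_0$'' replaced throughout by ``$\card\delta$'', yielding $\cov(\aleph_\delta,\card\delta^{+},\card\delta^{+},2)$ as the derived-functor value; the $2^{\aleph_\delta}$ term is again handled by $\mathcal P(\aleph_\delta)\cong\{\aleph_\delta\}$.

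With the dictionary in hand each clause reduces to a known estimate. For (i), the content is $\cov(\aa,\aleph_1,\aleph_1,2)=\aa$ for regular $\aa$: the inequality $\ge\aa$ is elementary --- a family of fewer than $\aa$ countable subsets of $\aa$ has union of cardinality $<\aa$ (using that $\aa$ is regular and uncountable; the case $\aa=\aleph_0$ is trivial), so it misses some singleton --- and I would take the reverse inequality from Shelah's PCF analysis. For (ii) and (iii) I would simply cite Shelah's bounds $\cov(\aleph_\omega,\aleph_1,\aleph_1,2)<\aleph_{\omega_4}$ and $\cov(\aleph_\delta,\card\delta^{+},\card\delta^{+},2)<\aleph_{\card\delta^{+4}}$ from \cite{ShCard}. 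Finally, (iv) is Shelah's Revised GCH theorem of \cite{ShCard}, transcribed into the present language via the identification of the revised power function $\lambda^{[\kappa]}$ with the appropriate covering number (equivalently, derived-functor value) discussed in Subsection~\ref{covering}.

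The hard part is not on the homotopy-theoretic side: given Theorem~\ref{7.2} that side is a triviality. The real obstacles are, first, making the $\card\delta^{+}$-variant of Theorem~\ref{7.2} precise --- which amounts to verifying that the $\{c,w,f\}$-labelling machinery of \cite{GaHa} survives replacing ``finite/countable'' throughout by ``of cardinality $\le\card\delta$/$\le\card\delta^{+}$'' (routine, but it is really the substance behind the family $\Qt_\kappa$) --- and, second, the PCF estimates themselves, which are deep theorems of Shelah that I do not attempt to reprove; the contribution here is only the observation that their left-hand sides are derived-functor values in $\QtNaamen$. One subtlety worth flagging explicitly: the equalities $\L_c(\{\lambda\})=\L_c(2^{\lambda})$ are purely category-theoretic --- the $\QtNaamen$-isomorphism $\mathcal P(\lambda)\cong\{\lambda\}$ together with isomorphism-invariance of the derived functor --- so they persist unchanged in every member of the family $\Qt_\kappa$.
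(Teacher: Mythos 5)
Your proposal is correct and follows essentially the same route as the paper: the paper's own proof is nothing more than the translation supplied by Theorem~\ref{7.2} followed by citations of Shelah ((i) is ``immediate by induction'', (ii) is a special case of (iii), and (iii), (iv) are quoted from the Handbook of Set Theory), and your justification of $\L_c(2^{\lambda})=\L_c(\{\lambda\})$ via the $\QtNaamen$-isomorphism $\mathcal P(\lambda)\cong\{\lambda\}$ together with the isomorphism-invariance of $\Lc\card$ is exactly the intended (if unstated) argument. One caveat on clause (iii): the variant you propose --- replacing (finite, countable) by (of size $\le\card\delta$, of size $\le\card\delta^{+}$), i.e.\ passing to the family $\Qt_\kappa$ --- does \emph{not} reproduce Theorem~\ref{7.2} verbatim; by the paper's own computation in Subsection~\ref{other} the derived functor of cardinality on $\Qtkk$ equals $\cov(\lambda,\kappa^{+},\kappa^{+},\kappa)$, with last parameter $\kappa$ rather than $2$, because the weak equivalences there tolerate errors of size $<\kappa$, and the construction that actually yields $\cov(\aa,\aB,\aB,2)$ is the co-slice category $\QtNaamen^{[\aleph_\beta]^{<\aleph_\beta}}$. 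In any case the paper reads $\L_c$ in (iii) as still computed in $\QtNaamen$, so that the equality with $\cov(\aleph_\delta,\card\delta^{+},\card\delta^{+},2)$ is part of the cited PCF theorem rather than of the homotopy-theoretic dictionary; this does not affect the correctness of your overall plan, only the bookkeeping of which model structure is responsible for which covering number.
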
\parno*{{\bf Proof.}} (i) is immediate by induction; (ii) is a
particular case of $(iii)$; (iii) is Theorem 7.2 of [Handbook of Set theory,
page  1209] ; (iv) Theorem 8.1 of [Handbook of Set theory,  page 1210]

Note that we do not say anything about the {\em fixed points}
$\alpha=\aleph_\alpha$ of $\aleph_{\bullet}$-function. (todo: is there an
explanation)

\subsection{Other model categories and covering numbers}\label{other}

Simple variations on the theme leading us to ``rediscover'' the covering
number $\cov(\lambda,\aleph_1,\aleph_1,2)$ result in other covering numbers.
Since most of the details are quite similar, we will be brief. 

For an object $A$ of $\QtNaamen$, let $\QtNaamen^{A}$ be the full sub-category
of arrows $A\lra X$ with the induced model structure, i.e., the full
subcategory of $\QtNaamen$ consisting of those objects $X$ such that $A\lra X$
with the labelling induced from $\QtN$. This is, trivially, a model category.
Applying Definition \ref{n:inf-functor} for $\QtNaamen^{A}$ and the function
$\card:\QtN^A\lra On^T$ to obtain the functor \[ \L_c^{Qt^{A}}\card :
\QtNaamen^{A} \lra On^\top, \] the cofibrantly replaced left-derived functor
of cardinality (on the model category $\QtNaamen^{A}$). We obtain: 
\begin{thm}
	Let $\beta\le\alpha$ be ordinals. Let
	$\ab*:=[\aleph_\beta]^{<\aleph_\beta}$. Then, with the above notation, if
	$\aB$ is regular then \[ \L_c^{Qt^{\ab*}}\card(\{\aa\})=\cov(\aa,\aB,\aB,2).
	\] In particular, if $\aa<\aleph_{\aa}$ and $\aB=(\cof \aa)^+$ then \[
	\L_c^{Qt^{\ab*}}\card(\{\aa\})=\pp_{\cof\aleph_\alpha}(\aleph_\alpha)=\pp(\aleph_\alpha)\]
\end{thm}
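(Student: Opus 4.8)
The plan is to imitate the proof of Theorem \ref{7.2} almost verbatim, tracking the single place where the relevant cardinal parameters change. The key observation is that passing from $\QtNaamen$ to $\QtNaamen^{\ab*}$ replaces the ``countability'' threshold implicit in the model structure by the threshold $\aleph_\beta$: an object $X$ admits an arrow $\ab*\lra X$ precisely when every element of $X$ of cardinality $<\aleph_\beta$ is contained (up to finitely many elements, but since we may close under subsets this is harmless) in an element of $X$, and the cofibrant objects of $\QtNaamen^{\ab*}$ relative to the initial object $\ab*$ are exactly the families of ${<}\,\aleph_\beta$-sized sets. So I would first record, as the analogue of the observations in Subsection \ref{covering}, that $\ab*\rightc \CY$ in $\QtNaamen^{\ab*}$ means $\CY$ consists of sets of size $<\aleph_\beta$, and $\ab*\rightc \CY \leftarrow \CY' \rightwf \{\aa\}$ encodes that $\CY$ (or its pullback along $\{\aa\}$) covers $[\aleph_\alpha]^{<\aleph_\beta}$ by unions of fewer than two — i.e. single — members, which is exactly a $\cov(\aa,\aB,\aB,2)$-witness.

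With that dictionary in place, the two inequalities go through as before. For $\L_c^{Qt^{\ab*}}\card(\{\aa\})\le\cov(\aa,\aB,\aB,2)$: given an optimal covering family $\CY\subseteq[\aleph_\alpha]^{<\aleph_\beta}$, one has $\ab*\rightc\CY$ by construction, and $\CY\rightw\{\aa\}$ follows from Proposition \ref{combinatorics} exactly as in Claim I of Theorem \ref{7.2}, once one checks that the relevant combinatorial criterion — every subset of $\aa$ of size ${<}\aleph_\beta$ lies up to finitely many elements in a member of $\CY$ — is precisely the covering property (here we use that $\aB$ regular makes ``${<}\aleph_\beta$'' closed under finite modification). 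For the reverse inequality one repeats Claim II: take $\CY_0$ the inverse limit of a witnessing $\CY$ with $\{\aa\}$, invoke Lemma 35 of \cite{GaHa} to factor $\ab*\rightc\CY_0\rightwc(\aa)_{(wf)}\rightwf\{\aa\}$ through a set $\{\Lambda\}$ with $\Lambda\subseteq\aa$ and $\aa\setminus\Lambda$ finite, pull back to get $\CY''=\{y\cap\Lambda:y\in\CY_0\}$ a covering family for $\Lambda$ by sets of size $<\aleph_\beta$, and then set $\CZ=\{y\cup(\aa\setminus\Lambda):y\in\CY''\}$, a cofibrant object of $\QtNaamen^{\ab*}$ covering $[\aa]^{<\aleph_\beta}$ with $\card\CZ\le\card\CY$. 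The only subtlety is that all objects occurring in the diagram pulled from $\StNaamen$ lie in $\QtNaamen^{\ab*}$, which — as in the original proof — reduces to the fact that singletons and cofibrant objects are cute, hence in $\QtNaamen$, and they manifestly admit an arrow from $\ab*$.

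The final ``in particular'' clause is then pure PCF bookkeeping: when $\aa<\aleph_{\aa}$ and $\aB=(\cof\aa)^+$, the cardinal $\cov(\aa,(\cof\aa)^+,(\cof\aa)^+,2)$ is the least size of a family of subsets of $\aa$ each of size $\le\cof\aa$ such that every subset of size $\le\cof\aa$ is contained in one of them; by a standard and well-known theorem of Shelah (see \cite{ShCard}, or the covering-number computations in Kojman's survey \cite{KojABC}) this equals $\pp_{\cof\aa}(\aleph_\alpha)$, and under the same hypothesis $\pp_{\cof\aa}(\aleph_\alpha)=\pp(\aleph_\alpha)$. I would simply cite this; no new argument is needed.

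I expect the main obstacle to be bookkeeping rather than conceptual: namely, being careful about the finite-modification clauses (``up to finitely many elements'') that appear in the (wc)- and (w)-criteria of Proposition \ref{combinatorics}, and verifying that they do not perturb the covering number when $\aB$ is regular (for $\aB$ singular the statement would genuinely change, which is why the hypothesis is there). A secondary point requiring care is the precise form of Lemma 35 of \cite{GaHa} in the relative category $\QtNaamen^{\ab*}$ — one must confirm that the factorization it provides can be taken with all intermediate objects still receiving an arrow from $\ab*$, but since these objects are cofibrant (families of small sets) this is automatic.
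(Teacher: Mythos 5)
Your treatment of the easy inequality and your overall plan for the hard one are sound, but there is a genuine gap at the one point where this proof actually differs from that of Theorem \ref{7.2}. In the reverse inequality you propose to ``invoke Lemma 35 of \cite{GaHa}'' to absorb the finite error sets. That lemma is specific to the countable setting: it uniformizes the statement ``every \emph{countable} subset of $\lambda$ is contained, up to a finite set, in a member of $\CY$'' into a single finite correction. What is needed here is the corresponding statement one level up: if every $L\in[\aleph_\alpha]^{<\aleph_\beta}$ is contained up to a finite set in a member of $\CY$, then there is \emph{one} finite $B$ such that every such $L$ is contained in a member of $\{Y\cup B: Y\in\CY\}$. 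This is not a formal consequence of the countable case; the paper states and proves it as a separate claim (explicitly introduced as ``a generalisation of Lemma 35''), by a diagonalization: assuming no finite $B$ works, one builds an increasing $\omega$-chain $L_0\subseteq L_1\subseteq\cdots$ in which $L_{i+1}$ adjoins a bad witness $L_b$ for each finite $b\subseteq L_i$, and the union $L_\omega$ then defeats every finite correction, contradicting the pointwise statement. The regularity of $\aleph_\beta$ is used precisely here, to guarantee $\card L_\omega<\aleph_\beta$ so that $L_\omega$ is a legitimate test set. Your proposal locates the role of regularity elsewhere (closure of ``${<}\aleph_\beta$'' under finite modification, which holds for every infinite cardinal) and therefore misses the step that constitutes the actual new content of this theorem relative to Theorem \ref{7.2}.

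A secondary structural difference: the paper first establishes the pointwise statement (its Claim I) for each individual $L$ of size $<\aleph_\beta$, by restricting $\CY$ to $L$ and running the countable-case machinery of Theorem \ref{7.2} there, and only afterwards uniformizes via the diagonalization above; your single appeal to the inverse limit with $\{\aleph_\alpha\}$ and one application of Lemma 35 conflates these two stages. The rest of your write-up --- the check that the relevant objects are cofibrant and hence lie in $\QtNaamen^{\ab*}$, and the citation of Shelah for identifying $\cov(\aleph_\alpha,\aleph_\beta,\aleph_\beta,2)$ with $\pp(\aleph_\alpha)$ in the ``in particular'' clause --- matches what the paper does and is fine.
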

	
\[
 \{\aa\}\lra \{\aa\} \xleftarrow{(w)}\CY\times \{\aa\}\lra \CY
\xleftarrow{(c)} \pb.  
\]

Thus, $\L_c^{Qt^{\ab*}}\card(\{\aa\})\le \kappa$.  So we now turn to the proof
of the other inequality. Let $\{\aa\}\lra_h \CY$ for some co-fibrant object
$\CY$ of minimal cardinality. We first prove: \\

\noindent{\bf Claim I} Let $L\subseteq \aa$ be any set with $\card L<\aB$.
Then there exists some $L'\subseteq L$ such that $L\setminus L'$ is finite and
such that $\{L'\}\lra \CY$.  We denote this property
$[\aa]^{<\aB}\lra^*\CY$.

\begin{proof} By Claim I of the previous theorem we know that every countable subset
of $L$ is contained, up to a finite set, in some element of $\CY$. That is, if
$L_0\lra  L_c$ (where $\0\rightc L_c\rightwf \{L\}$) then $L_0\lra^* \CY$.
Letting $\CY_0$ be the inverse limit of $\CY$ and $L$ this means that
$\CY_0\rightwc L_c$. Therefore, as in the proof of Claim II of the previous
theorem, we may apply Claim 33 of \cite{GaHa} to obtained $L'$ satisfying the
requirements. \qed$_\text{Claim I}$ \\

To conclude the proof of the theorem we need one additional combinatorial
fact, a generalisation of Lemma 35 of \cite{GaHa}: \\

\noindent{\bf Claim II} Assume that $[\aa]^{<\aB}\lra^*\CY$.  Then there is a
finite set $B$ such that $\aa^{<\aB}\lra \CY_B$, where $\CY_B$ is the set
$\{Y\cup B: Y\in \CY\}$. 

\proof Assume not. Then for any finite $b\subseteq \aB$ there exist $L_b\in
[\aa]^{<\aB}$ and $L\not \lra \CY_b$. Let $L_0=L_{\0}$. Define by induction:
\[ L_{i+1} = L_i\cup \{L_b: b\in [L_i]^{<\aleph_0}, L_b\not \lra \CY_b\}.  \]
Let $L=L_\omega:=\bigcup_{i<\omega} L_i$. Because $\aB$ is regular, $\card
L<\aB$. But there is no finite set $b\subseteq L$ such that $L\lra \CY_b$.
Indeed, if $b$ were such a set, then $b\subseteq L_i$ for some $i<\omega$. So
$\{L_b\}\lra \{L_{i+1}\}$ and $\{L_b\}\not \lra \CY_b$. Since $\{L_{i+1}\}\lra
\{L\}$ it follows that $L\not \lra \CY_b$, a contradiction. \qed$_\text{Claim
II}$ \\

Let $B$ be a finite set as in Claim II, then $\CY_B$ covers $\aa^{\aB}$, and
$\card \CY_B=\card \CY$. Because $B$ is finite and $\CY\in \Ob\Qt$ it follows
immediately from the definition that $\CY_B\in \Ob\Qt$, with the desired
conclusion.  \end{proof}

The construction of the co-slice category, $\Qt^A$, for an object $A\in
\Ob\Qt$ is standard in category theory. We proceed now to a slightly different
construction, to our taste quite natural from the set theoretic point of view,
but not entirely obvious from on the category theoretic side: 

Let $X$ be a class of sets, fix a (regular) cardinal $\kappa$ and  denote
$\ukappa X:=\{\bigcup S \,:\, S\subseteq X, \card S < \kappa\}$. Call a class
$X$ of sets $\kappa$-directed if $\ukappa X\lra X$, namely  if any  collection
of less than $\kappa$ members of $X$ has a common upper bound (with respect to
$\subseteq)$) in $X$.  Let $\qtkappa$ be the full subcategory of $\StNaamen$
consisting of $\kappa$-directed classes.

Let $\qtkk$ be a category that has the same object as QtNaamen,
$\Ob\qtkk=\Ob\Qt$, and $X\lra Y$ in $\qtkk$ if and only if $\ukappa
X\lra\ukappa Y$.  Given $X\in \Ob\qtkk$  denote $F(X):=\ukappa X$. It is clear
that $F:\qtkk\to \qtk$ is a functor. Moreover the inclusion mapping $G:\qtk\to
\qtkk$ given by $G(X)=X$ is a functor (as for any $X,Y \in \Ob\qtk$ if $X\lra
Y$ then $\ukappa X\lra \ukappa Y$). By definition, for $X\in \Ob\StNaamen$,
$X\longleftrightarrow \ukappa X$, so the functors $F$ and $G$ show that $\qtk$
is equivalent to $\qtkk$.

It is easy to check that for regular $\kappa$ the category $\qtkappa$ equipped
with the following labelling satisfies Quillen's axioms (M1)-(M4) and (M6): 

\begin{defn}\label{qtk}

\bi  

\item $X\lra Y$ iff $\forall x\in X\exists y\in Y \, x\subseteq y$ 

\item $X\rightwc Y$ iff $\forall y\in Y\exists x\in X \,(\card( y \setminus x)
	<\kappa )$ (and $X\lra Y$)

\item $X\rightc Y$ iff $\forall x\in X\exists y\in Y  ( \card y\leq \card x +
	\kappa )$ (and $X\lra Y$)

\item $X\rightf Y$ iff $\forall x\in X \forall y'\subseteq y \in Y\exists
	x'\in X (\card y'< \kappa\implies x\cup y'\subseteq x')$ (and $X\lra Y$)

\item $X\rightwf Y$ iff $\forall x\in X \forall y'\subseteq y\in Y \exists
	x'\in X (\card y'\leq \card x+\kappa\implies y'\subseteq x')$ (and $X\lra
	Y$)

\item $X\rightw Y$ iff $\forall x\in X \forall y'\subseteq y\in Y \exists
	x'\in X 
	\card( y'\setminus x')<\kappa)$ (and $X\lra Y$)

\ei

\end{defn}

\begin{rem}\label{intersect} Observe that $X\rightwc Y$ ($X\rightwf Y$) if and
	only if $X\rightc Y$ ($X\rightf Y$) and $X\rightw Y$. Moreover, $X\rightw Y$
	if and only if there exists $Z$ such that $X\rightwc Z\rightwf Y$.
\end{rem}

To turn $\qtk$ into a model category, as with $\StNaamen$, let $\Qtk$ be the
full sub-category of \emph{cute} objects of $\qtk$, namely, those objects
satisfying the diagram of Figure \ref{Qt} (with respect to the labelling in the
above definition). Now one defines, for $X\in \Ob\qtk$, $\tilde X$ to be the
product of all cute $Y\in \Ob \qtk$ such that $X\lra Y$ (we leave it as an
exercise to verify that this is indeed an object in $\StNaamen$). It is then
easy to verify that $\tilde X$ is cute and that if $\0\rightc X$ then
$X=\tilde X$ and that $\widetilde {\{S\}}=\{S\}$ for any set $S$. So $\Qtk$
satisfies Axiom (M0) (inverse limits are simply products, and the direct limit
$\{X_1,\dots,X_k\}$ is simply $\widetilde{\Sigma_{i=1}^k X_i}$, where $\Sigma
X_i$ is the limit of the $X_i$ in $\qtk$). That the remaining axioms are
satisfied in $\Qtk$ can be proved precisely as in \cite{GaHa}, with the
obvious adaptations (replacing "countable" there with "of cardinality at most
$\kappa$" and "finite" there with "of cardinality smaller than $\kappa$", and
see also Remark 37 in \cite{GaHa} for the fixed point argument needed for the
proof the analogue of Lemma 35).  

Recall that, as pointed out above, $\qtkk$ is equivalent to $\qtk$. This
equivalence can be used to label $\qtkk$ uniquely to make the two categories
equivalent as labelled categories. Since the definition of $\Qtk$ is given
strictly in terms of the labelling of $\qtk$, we obtain a full sub-category,
$\Qtkk$, of $\qtkk$, equivalent as a labelled category to $\Qtk$ ($\Qtkk$ is
both the image of $\Qtk$ under the functor mapping $\qtk$ into $\qtkk$ and the
full sub-category of cute objects of $\qtkk$ as a labelled category). Thus,
$\Qtkk$ is a model category equivalent to $\Qtk$. 

As $\Qtk$ is equivalent (as a model categories) to $\Qtkk$ so are their
associated homotopy categories. Computing the homotopy category of $\Qtk$ is
rather simple: objects are $<_{\kappa}$-directed classes with arrows $X\lra Y$
if and only if for all $x\in X$ there exists $y\in Y$ such that
$\card(x\setminus y)<\kappa$ (this follows immediately Definition \ref{qtk}
and the fact that $\rm{Ho}{\Qtkk}$ is obtained by inverting all $(w)$-arrows
in $\Qtkk$). 

It is now straightforward to verify that the left derived functor of $\card :
\Qtkk \lra On^\top$ is Shelah's revisited power function: \[
\Lc\card(\{\lambda\})=\lambda^{[\kappa]}:=\cov(\lambda,\kappa^+,\kappa^+,\kappa).\]
Indeed, $\cov(\lambda, \Delta, \theta, \sigma)$ is the least size of a family
$X \subseteq  [\lambda]^{<\Delta}$, such that every subset of $\lambda$ of
cardinality smaller than $\theta$, lies in a union of less than $\sigma$
subsets in $X$. In our notation, taking $\Delta=\kappa^+=\theta$ and
$\sigma=\kappa$, the condition on the family $X$ can be stated as: $ X \lra
[\lambda]^{\le \kappa}$ and $[\lambda]^{\le \kappa}\lra \bigcup_{<\kappa }X$.
Now, the first of these conditions is precisely $\0\rightc X\lra \{\lambda\}$,
whereas the second condition is $\bigcup_{\le \kappa} X \longleftarrow Y
\rightwf \{\lambda\}$ for some $Y$. But in $\qtkk$ (and therefore in $\Qtkk$),
$\bigcup_{<\kappa} X \longleftrightarrow X$. Therefore, this last condition is
equivalent to $X \longleftarrow Y \rightwf \{\lambda\}$. Combining everything
together we get that $\cov(\lambda, \kappa^+,\kappa^+,\kappa)\ge \Lc
\card(\{\lambda\})$. The proof of the other direction is similar (modulo the
obvious adaptations) to the proof of the analogous fact in Theorem \ref{7.2}.

The model category $\Qtkk$ allows us to formulate quite easily the notion of
the cardinal $\kappa$ being (non) measurable. Recall that a cardinal $\kappa$
is \emph{measurable} if it is uncountable and admits a $<k$-complete
non-principal ultrafilter, or, equivalently, 0-1 valued probability countably
additive measure such that every subset is measurable.  Such an ultrafilter
exists on he cardinal $\kappa=\omega$, as any filter is  $<\omega$-complete.
We prove that: 

\begin{lem}\label{measurable} The following are equivalent for a regular
	cardinal $\kappa>\omega$: \bi \item $\kappa$ is not measurable.  \item For
	all $X\in \Ob\Qtkk$ if $X\xrightarrow{(i)}\{\kappa\}$ then $X\rightw
	\{\kappa\}$.  \item $X\xrightarrow{(i)} Y \xleftarrow{(c)} \perp$ implies
	$X\rightwc Y$.  \item In  $\rm{Ho}\Qtkk$ if $X\xrightarrow{(i)} Y$ then
	$Y\lra X$ for all $X, Y$. 
	 \ei where $X\xrightarrow{(i)} Y$ means that $X\lra Y$ is an indecomposable
	 arrow.  \end{lem}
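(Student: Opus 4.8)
The plan is to establish the cycle of equivalences by first decoding what the indecomposable arrows $X\xrightarrow{(i)}Y$ look like in $\Qtkk$, exactly as the excerpt did for $\StNaamen$: an arrow $\0\lra A$ is indecomposable precisely when $A$ is (isomorphic to) a class of the form $\{\{a\}\}$, i.e. a singleton whose member is a singleton; more generally $X\xrightarrow{(i)}\{\kappa\}$ should force $X$ to be, up to isomorphism in $\rm{Ho}\Qtkk$, a partition-like family — a family of subsets of $\kappa$ with no room to interpolate a non-isomorphic object between $X$ and $\{\kappa\}$. Unwinding the definition of $\lra$ in $\qtkk$ (cofinality of $\bigcup_{<\kappa}$-closures) together with the characterisation of $\rm{Ho}\Qtkk$ given right before the lemma (arrows $X\lra Y$ iff every $x\in X$ is contained up to $<\kappa$ elements in some $y\in Y$), I expect to see that $X\xrightarrow{(i)}\{\kappa\}$ translates into: $X$ generates, via $<\kappa$-unions, a $\kappa$-complete filter (or its dual, a family with the $\kappa$-covering property) on $\kappa$, and the indecomposability is what rules out a proper refinement.

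First I would prove $(1)\Leftrightarrow(4)$, which should be the conceptual heart. The statement $(4)$ says: in $\rm{Ho}\Qtkk$, every indecomposable arrow $X\xrightarrow{(i)}Y$ is split by an arrow $Y\lra X$, i.e. is an isomorphism. Taking $Y=\{\kappa\}$ and $X$ the indecomposable family described above, the existence of $Y\lra X$ in $\rm{Ho}\Qtkk$ means (by the homotopy-category description) that $\kappa$ itself is covered up to $<\kappa$ elements by a single member of $X$ — which, after dualising, is precisely the assertion that the $\kappa$-complete filter generated by $X$ is not an ultrafilter, equivalently that $\kappa$ carries no $<\kappa$-complete non-principal ultrafilter. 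Conversely, a measurable $\kappa$ produces, from a $<\kappa$-complete non-principal ultrafilter $\mathcal U$, an indecomposable arrow into $\{\kappa\}$ with no section: take $X=\{\kappa\setminus A: A\in\mathcal U\text{ "small generators"}\}$ or the appropriate dual family, check indecomposability from $<\kappa$-completeness (any interpolant would yield a set neither in $\mathcal U$ nor with complement in $\mathcal U$), and check no section exists because $\kappa\notin$ the generated ideal. The translation $(1)\Leftrightarrow(2)$ is then the "absolute" version of the same computation without passing to $\rm{Ho}$: $X\xrightarrow{(i)}\{\kappa\}$ plus non-measurability forces the witnessing family to already be a weak equivalence in $\Qtkk$ by Remark \ref{intersect} and Proposition-style unwinding of Definition \ref{qtk}; and $(2)\Leftrightarrow(3)$ is the cofibrant-replacement bookkeeping — $(3)$ is $(2)$ precomposed with a cofibrant replacement $\perp\rightc Y$, using $X\rightwc Y \iff X\rightc Y \wedge X\rightw Y$ from Remark \ref{intersect}, so that once $X\rightw Y$ is known (from $(2)$-type reasoning) and $X\lra Y$ is a cofibration into a cofibrant object it is automatically a $(wc)$.

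The order I would carry this out: (a) characterise indecomposable arrows in $\qtkk$/$\Qtkk$ via the $\{\{a\}\}$-style argument from Section \ref{ordinals}, adapted to $<\kappa$; (b) characterise $X\xrightarrow{(i)}\{\kappa\}$ as "$X$ generates a $\kappa$-complete filter that is maximal-or-not on $\kappa$"; (c) prove $(1)\Rightarrow(4)\Rightarrow(2)\Rightarrow(3)\Rightarrow(1)$ using the homotopy-category description and Remark \ref{intersect} at the relevant steps, and the measurable-$\kappa$ construction to close the loop with $\lnot(1)\Rightarrow\lnot(4)$ (or equivalently $\lnot(1)\Rightarrow\lnot(3)$). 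The main obstacle I anticipate is step (b): pinning down exactly which homotopy-invariant data an indecomposable arrow $X\xrightarrow{(i)}\{\kappa\}$ retains, and verifying that indecomposability is equivalent to the filter generated by $X$ being an \emph{ultra}filter (rather than merely a maximal-among-something filter) — this is where $\kappa$-completeness must be used carefully, since interpolating an object $Z$ with $X\lra Z\lra\{\kappa\}$ corresponds to splitting some member of $X$ into a $<\kappa$-union of smaller sets none of which individually covers, and one must check this is possible exactly when the relevant set is not an atom of the filter. Once that dictionary entry is nailed down, the four equivalences are routine diagram-chasing in the posetal setting.
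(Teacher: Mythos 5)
Your overall dictionary is the right one, and it is essentially the paper's: an indecomposable arrow $X\lra\{\kappa\}$ in $\Qtkk$ corresponds to $\bar X:=\bigcup_{<\kappa}X$ being a maximal $\kappa$-complete ideal on $\kappa$ (the dual of a $\kappa$-complete ultrafilter), the arrow is a weak equivalence exactly when that ideal is principal, and measurability is the existence of a non-principal such object. But three of your concrete steps would not survive being written out. First, the $\{\{a\}\}$-style characterisation classifies indecomposable arrows \emph{out of} $\0$; what you need are indecomposable arrows \emph{into} $\{\kappa\}$, and the right move there is different: given $A\subseteq\kappa$ with $A\notin\bar X$, interpolate the ideal generated by $\bar X\cup\{A\}$ between $X$ and $\{\kappa\}$, so that indecomposability forces $\kappa\setminus A\in\bar X$, i.e.\ maximality. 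Your picture of an interpolant as ``splitting a member of $X$ into a $<\kappa$-union'' is not what interpolation means in this posetal order. Second, your reading of a section in (4) is wrong as stated: $\{\kappa\}\lra X$ in $\mathrm{Ho}\,\Qtkk$ means some $x\in X$ has $\card(\kappa\setminus x)<\kappa$, which --- given maximality and $\kappa$-completeness --- is equivalent to the ideal being \emph{principal}, not to the dual filter ``not being an ultrafilter'' (indecomposability already guarantees that it \emph{is} an ultrafilter). You land on the correct endpoint, but the intermediate equivalence you assert is false and would derail a written proof.

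Third, $(2)\Rightarrow(3)$ is not cofibrant-replacement bookkeeping: in (3) the target $Y$ is an arbitrary cofibrant object, not $\{\kappa\}$, so you must localise --- for each $y\in Y$ with $\card y=\kappa$ show that $X_y:=\{x\in X: x\subseteq y\}$ is a maximal $\kappa$-complete ideal on $y$ (maximality again coming from indecomposability of $X\lra Y$), apply the $\{\kappa\}$-case to each such $y$ separately via a bijection with $\kappa$, and only then assemble $X\rightwc Y$ from Definition \ref{qtk}. Relatedly, if you insist on proving $(1)\Leftrightarrow(4)$ directly rather than reducing (4) to (3), you owe an argument identifying indecomposable arrows of the category $\mathrm{Ho}\,\Qtkk$ with zigzags as in $(\diamond)$ in which exactly one arrow is a non-$(w)$ indecomposable arrow of $\Qtkk$; that identification is not automatic, and avoiding it is precisely why the paper routes (4) through (3).
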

\begin{proof} First, we observe that if $X\in \Ob\qtkk$  and 
$\bar X=\bigcup_{<\kappa} X$ then
$\bar X\longleftrightarrow X$ (in $\qtkk$). In particular, $X\in \Ob\Qtkk$ if and
only if $\bar X$ is. Note that if $X\in \Ob\qtkk$ and $X$ is a non-empty set
then $\bar X$ is a $\kappa$-complete ideal on $\bigcup X$. Indeed, it is
closed under unions of size less than $\kappa$, and by definition $\bar X$ is
downward closed. 

Thus, if $X\in \Ob\qtkk$ then the statement $X\lra \{\kappa\}$ is equivalent
to $\bar X\lra \{\kappa\}$ 
and since $\{\bigcup X\}\in \Qtkk$ we get $X\lra \{\bigcup X\}\lra
\{\kappa\}$. If, in addition, $X\xrightarrow{(i)} \{\kappa\}$ then either
$\{\bigcup X\}\leftrightarrow X$ or $\bigcup X=\kappa$. But if $\bigcup X\neq
\kappa$ then $X\lra \{\kappa\}$ is not indecomposable (take $X\cup \{y\}$ for
any $y\in \{\kappa\}\setminus \bigcup X\}$). So $X\xrightarrow{(i)}\{\kappa\}$
is equivalent to $\bar X $ being a maximal ideal on
$\kappa$ which is also $\kappa$-complete. 

It remains, therefore, to ascertain when is such an ideal principal.   On the
one hand, it is obvious that if $X$ is a maximal principal ($\kappa$-complete)
ideal on $\lambda$ then $X\rightwc \{\kappa\}$. Now assume that $X$ is a
maximal ideal on $\{\kappa\}$ which is $\kappa$-complete. Then  $X\lra
\{\kappa\}$, which is, by Definition \ref{qtk}, equivalent to $X\rightc
\{\kappa\}$, and assume that $X\rightw \{\kappa\}$. Then $X\rightwc
\{\kappa\}$, which - by definition - means some $x\in X$ satisfies
$\card(\kappa \setminus x)<\kappa$, and since $X$ is $k$-complete this means
that $\bigcup X\neq \{\kappa\}$ (if it were, then already a small union would
cover everything). Maximality implies that in that case $X$ is principal.

The above shows the equivalence of $(1)$ and $(2)$ above, as well as
$(3)\Rightarrow (1)$. So it remains to prove $(2)\Rightarrow (3)$. Indeed,
assume that $X,Y$ are as in $(3)$. We may assume that $X=\bar X$. We may also
assume that there exists some $y\in Y$ such that $\card(y)=\kappa$ (otherwise
$X\rightwc Y$ is automatic from Definition \ref{qtk}). So fix any $y\in Y$. It
will suffice to show that $X_y:=\{x\in X: x\subseteq y\}$ is a maximal
$k$-complete ideal on $y$. This will be enough since then, by $(2)$ this ideal
is principal, and as $y\in Y$ was arbitrary of cardinality $\kappa$, we will
be done, by Definition \ref{qtk}. So it remains to show that $X_y$ is a
maximal ideal on $y$. That is it is a $k$-complete ideal is proved exactly as
above. So we only have to verify its maximality, which is immediate from the
indecomposability of the arrow $X\lra Y$.

To see the equivalence with (4) we may assume that $X$ and $Y$ are co-fibrant.
Further, note that $X\xrightarrow{(i)}_h Y$ if there exists a sequence as in
($\diamond$) in which all but one of the arrows is a $(w)$-arrow, and this
arrow is indecomposable. But, if \[ \perp \rightc X\rightw X_1 \leftw X_2
\rightw X_3\leftw \dots X_i \xrightarrow{(i)} X_{i+1} \rightw X_n \lra Y
\xleftarrow{(c)} \perp \] then in $\rm{Ho}\Qtk$ the object $X$ is isomorphic
to $X_i$ and $Y$ is isomorphic to $X_{i+1}$ and $X_{i+1}$ is co-fibrant. Thus,
(3)$\iff $(4).  \end{proof}

We point out that, since $\Qtkk$ is a closed model category (i.e., it
satisfies axiom (M6)),  condition (3) above can be expressed as a lifting
property: \[ \perp\rightc Y \Rightarrow X\xrightarrow{(i)} Y\rtt X'\rightf Y'.
\] Thus, the notion of $\kappa$ being a (non)-measurable cardinal has an
essentially model category-theoretic interpretation. 

\def\St{\StNaamen}

The following gives another intriguing set theoretic angle to the model
category $\Qtkk$. Let $L$ denote, as usual, G\"odel's constructible model of
set theory. Recall, e.g., Theorem \cite[Theorem 13.9]{}, that $L$ is the least
transitive class (i.e., $L\lra \{L\}$) closed under all G\"odel operations,
and universal in the sense that $X\subseteq L$ implies $X\subseteq Y\in L$.
Let $\St^L$ be the full sub-category of $\St$ whose objects are sub-classes of
$L$ definable within $L$. In other words, viewing $L$ as a model of ZFC and
ignoring the ambient universe $V$, we let $\Ob\St^L$  be all sub-classes of
$L$.  Note that as $L$ itself is a class  element in $\Ob\St^L$ is, in fact an
object of $\St$. 

\begin{rem}\label{nomeasurable} It is well known (and easy to prove) that if
	there is no measurable cardinal below $\kappa$ then $\kappa$ is measurable
	if and only if it admits a $\sigma$-complete ultrafilter, in other words, a
	countably additive measure such that every subset has measure either 0 or 1.
	Thus, the statement "there is no measurable cardinal" is equivalent to the
	statement "the only indecomposable arrows in $\St_{\aleph_1}$ are
	(wc)-arrows". 
\end{rem}

Obviously, by the remark concluding the previous paragraph, since $L\models
ZFC$,  we can perform the construction of \cite{GaHa} in $\St^L$. However,
since notions of countability and (infinite) equicardinality are not absolute,
the labelling obtained in this way will, in general, not coincide with the
labelling induced on $\St^L$ from $\St$.  Indeed, the labelling induced on
$\St^L$ from $\St$ will not (in general) satisfy axiom (M2) (whereas, the the
labelling following the construction of \cite{GaHa} does). 

In the above, it seems clear that the labelling induced on $\St^L$ from $\St$
is not the "right" one. The situation is less clear when trying to give
sub-categories of $\St^L$ a model structure. As mentioned above, carrying out
the construction of \cite{GaHa} we can construct $\Qt^L$ and the associated
model  categories $\Qtkk(L)$ (where $\kappa$ is a cardinal in $L$). But it
seems that under certain set-theoretic assumptions other model structures can
also be constructed. 

Let $\kappa$ be a (regular) cardinal in $V$. Let $\Qtkk(L)$ be the full
labelled sub-category of $\Qtkk$ whose objects are also in $\Ob\St^L$. It is
not hard 
to check that $\Qtkk(L)$ is closed under (small) limits, and that, being a
full sub-category of $\Qtkk$ it also satisfies ($M1$) and ($M3$)-($M6$)
(though ($M6$) requires a small calculation).  Recall that the
($M2$)-decomposition of each arrow in $\Qtkk$ is unique (up to
$\Qtkk$-isomorphism), so in order to check whether $\Qtkk(L)$ satisfies ($M2$)
we have to show that if either $X\rightc Z \rightwf Y$  or $X\rightwc Z
\rightf Y$ with $X,Y\in \Ob\Qtkk(L)$ then $Z$ is isomorphic to an element in
$\Ob\Qtkk(L)$. Now, recall that, by definition $Z$ is the class of all
$z\subseteq y\in Y$ such that $\card z\le \card x+\kappa$ for some $x\in X$,
or $Z$ is the class of all $z\subseteq y\in Y$ such that $\card (z\setminus
x)<\kappa$ for some $x\in X$. 
Observe that, since $X$ and $Y$ are definable within $L$ so is the class
$Z_L:=Z\cap L$ of all constructible members of $Z$. Thus, it will suffice to
show that $Z_L\longleftrightarrow Z$. Of course, in general, there is no
reason for this to be true. But if $\kappa>\aleph_1$ then this statement is
equivalent to the conclusion of Jensen's covering lemma (for $\kappa$). 

Thus, e.g., if $0^\#$ does not exist and $\kappa>\aleph_1$ then $\Qtkk(L)$ is
a model category. It is an easy exercise to check that there is no cardinal
$\lambda\in L$ such that $\Qtkk(L)$ is precisely the model category
$\Qt_{\lambda}^+$ constructed within $L$. We do not know whether $\Qtkk(L)$
could be equivalent to $\Qt_{\lambda}^+$ for some $\lambda$ (with the latter
constructed within $L$).

\section{Suggestions for future research}\label{further}

Among the possible objections to the work presented in the present paper and
its predecessor there are two which we view as most intriguing. These are the
coherence and usefulness of work. 

The problem which we call \emph{coherence} is that, as explained in Section
XYZ, if $f:\FC\lra On^{\top}$ is any function on the posetal model category
$\FC$, then the value of the (cofibrantly replaced) left-derived functor of
$f$ is not necessarily invariant under the equivalence of model categories.
Namely, if $\FC'\equiv \FC$ (as model categories) and $F:\FC'\to \FC$ is a
witness (of one direction of) this equivalence then $\Lc(F\circ f)(x)$ is not
necessarily the same as $\Lc f(F(x))$. 

This is most obvious in our calculation of Shelah's revised power function. In
deriving the cardinality function on $\Qtkk$ we obtain the desired result, but
if we tried doing the same on the equivalent model category  $\Qtk$, we would
have obtained a different answer. The same situation would have occurred if
trying to derive cardinality in $\QtNaamen$ we worked with the (equivalent)
full subcategory of "downward closed" objects. 

Because the derivation of a function $f:\FC\lra On^{\top}$ on a posetal model
category can be viewed as a minimization operation (of $f(x)$ over all $x'\in
\Ob \FC$ homotopy equivalent to $x$) our (informal) approach to this problem
was that the "correct" derivation is the one giving the minimal results, i.e.,
if $\FC'\equiv \FC$ witnessed by the functor $F:\FC\lra \FC'$ which is
injective on $\Ob\FC$ then   the "correct" function to derive is $(f\circ F)$,
rather than $F$. The first problem for future research is, therefore
\begin{problem} Let $\FC$ be a posetal model category, $f:\FC\to On^T$ any
	function. Find a functor $\tilde {\L}_f f:\FC\lra On^{\top}$ such that
	\begin{enumerate} \item  ${\tilde \L}_f(x)\le \L_f(x)$ for all $x$.  \item
			$\L_f$ is not trivial (unless, say, $L_{(f\circ F)}$ is trivial for
			every functor $F:\FC'\to \FC$ with $\FC'\equiv \FC$.  \item
			$\tilde{\L}_f$	 is invariant under the equivalence of model categories
			(in the sense explained above).  \end{enumerate} In other words, extend
			the notion of the left derived functor of a functor $f:\FC\lra
			On^{\top}$ to a larger class of function with the result  as invariant
			as possible under equivalence of model categories.  \end{problem}

The second objection to the present work relates to its usefulness. Here is a
list of problems, a positive answer to some of which could indicate of the
usefulness of the new tools developed in the present work: 

\begin{problem} Are there more combinatorial concepts that can be captured by
	our suggested formalism, e.g., closed unbounded sets, stationary sets,
	Fodor's lemma, diamond, square etc..  \end{problem}

As a somewhat speculative special case of the previous problem consider the
fact that there are no measurable cardinals in $L$. As we have seen in Remark
\ref{nomeasurable} the statement "there are no measurable cardinals" can be
restated in our geometric language. Thus it is natural to ask: 

\begin{problem} Can it be proved using (mainly) the language of model
	categories that there are no measurable cardinals in $L$. In other words,
	can an analogue of Scott's theorem \cite{Scott} stating that if there are
	measurable cardinals then $V\neq L$ be proved using our geometric language?
\end{problem} As we do not have any "geometric" characterisation of $L$
(unlike, e.g., the set theoretic characterisation of $L$ being the smallest
inner model, i.e., the smallest submodel of $V$ containing all ordinals, or
the smallest transitive universal class closed under G\"odel operations), the
above question is somewhat speculative. As in our treatment of ordinals in
Section \ref{ordinals}, it seems reasonable to use some auxiliary notions such
as naming $On\in\Ob{\StNaamen}$ to address this problem. 

\begin{problem} Apparently, given a model structure on a category $\FC$, the
	computation of homotopy limits (i.e. the computation of limits in the
	associated homotopy category) gives in many cases important information on
	the category $\FC$. In the case of $\Qt$, one can easily give an explicit
	combinatorial interpretation of the limit (at least for set-sized diagrams).
	Are these objects of set theoretic significance? More generally, is there a
	set theoretic significance to the class of \emph{cute} objects?  To the
	homotopy category itself?  Are there other derived functors
	defining invariants of models of ZFC that, say, can be
	bounded in ZFC?  
\end{problem}

In classical homotopy theory, homotopy groups (by themselves) and the
associated structures (such as long exact sequences) are powerful tools
allowing many calculations. In (pointed) model categories  analogues of such
constructions exist, such as the groupoid of homotopy classes between any two
objects $A,B$ (where $A$ is a co-fibrant object and $B$ is a fibrant object)
as well as other constructs, analogous of other classical homotopical tools
such as the suspension and loop functors, fibration sequences and more. In
posetal model categories these constructions degenerate, and much of the
computational power of the associated homotopy structure is lost. This may be
one of the reasons that while we were able to recover homotopical
interpretations of important and non-trivial set theoretic objects we were
unable to prove any of their properties using the model category structure on
$\Qt$. 

In view of the above it is interesting to look for other constructions in
$\Qt$ (or $\rm{Ho}\Qt$), which may serve as analogues of the above mentioned
model categorical constructions. One possible such construction is the
sequence of model categories $\Qtk$ when $\kappa$ ranges over all cardinals. 

First, recall that we were able to give the category $\Qtk$ a model structure
only under the assumption that $\kappa$ is a regular cardinal. A first problem
is, therefore, to construct a similar model category for singular $\kappa$. It
seems that such a model category can be constructed inductively (assuming
$\Qt_{\lambda}$ was constructed for all $\lambda<\kappa$) by taking an
appropriate  "limiting" process. For  example, one could define \[
\Ob\qtk:=\left \{\bigcup_{\lambda<\kappa} X_{\lambda}: X_\lambda\in \Ob\qtk,
X_\lambda \subseteq X_{\lambda'} \text{ if } \lambda < \lambda'\right \} \]
with the additional requirement that if $X=\bigcup_{\lambda<\kappa} X_\lambda$
as above then the $X_{\lambda}$ are uniformly definable (this is required in
order to assure that $X$ is, indeed, a class). And the labelling \[
X\xrightarrow{(*)}_{\kappa} Y \iff (\forall^* \lambda)(y\in Y\Rightarrow
X\times \{y\} \xrightarrow{(*)}_{\lambda} Y \] where $\forall^* \lambda$ means
"for all large enough $\mu<\kappa$". Passing to the full sub-class of
\emph{cute} objects (with respect to this labelling) we apparently get a model
category $\Qtk$. It is unclear to us, however, whether this construction is
the "correct" one. 

There is also an obvious functor $F_{\kappa}: \qtk\lra \St_{\kappa^+}$ given
by $X\mapsto \bigcup_{<\kappa^+} X$ for $X\in \Ob\qtk$. Indeed, this is a
functor of model categories: this is obvious for (c) and (f) arrows, and not
much harder for (w)-arrows, with the conclusion following from Remark
\ref{intersect}.  On the level of the associated homotopy categories, it is
clear that, $\gamma(F_{\kappa}(X))=\perp_{\Qtk^+}$ for any co-fibrant $X\in
\Ob\Qtk$ (where, as above, $\gamma:\Qtk\lra \rm{Ho}\Qtk$ is the localization
functor).  Since the co-fibrant objects of any model category  suffice to
determine the associated homotopy category, it follows that the homotopy
category associated with the image of $\Qtk$ under $F_{\kappa}$ is trivial.
This gives the sequence of categories $\Qtk$ a certain flavour of "exactness",
which seem to require some further research.


\subsection{Looking back}We conclude these notes looking back to the original
motivation leading to the development of the model category $\Qt$, i.e., the
goal of developing a homotopy structure for the class of models of an
uncountably categorical theory and,  more generally, to (quasi-minimal)
excellent abstract elementary classes (see, e.g., \cite{Bal4} for the details).  
The need for homotopy theoretic tools in this contexts arose through the
study, by Zilber and his school, of categoricity problems of model theoretic
structures such as pseudo-exponentiation \cite{ZilCov} and covers of semi-Abelian
varieties \cite{BaysThesis}, \cite{BaZil}. 
The model theoretic analysis needed to show  the (uncountable) categoricity of the natural examples studied in the above mentioned references 
uses known number theoretic and algebro-geometric results and conjectures 
nowadays understood as being of essentially
cohomological character, and formulated in functorial language. Such statements are,  e.g.,
particular cases of Andr\'e's generalized Grothendieck conjectures on periods
of motives (\cite{Bertolin},
\cite[7.5.2.1 Conjecture]{Andre},\cite[\S4.2 Conjecture,\S1.2 Conjecture]{KonZag}),
the Mumford-Tate conjecture on the
image of Galois action on the first \'etale cohomology (\cite{Serre}), Kummer
theory (\cite{Rib}), and more. This does not seem to be entirely coincidental, as  the definition of Shelah's, so called, Excellent classes - the model theoretic machinery employed in this study -  and in particular the requirement that there exists a 
unique prime model over maximally independent tuples of
countable (sub) models (and that this requirement makes sense) reminds, 
at least superficially, some of the axioms of a model category.

However, the common model theoretic language does not seem to have the means to incorporate 
this functorial language in its full power and generality. Thus, in order to be applied in 
addressing the above mentioned categoricity problems "old-fashioned" reformulations of 
these conjectures, deprived of their functorial language and homological character 
had to be used --- Schanuel's conjecture and its cognates explicated by Bertolin \cite{Bertolin} derived 
from the generalized Grothendieck conjecture on periods, Bashmakov's
original formulations of Kummer theory for elliptic curves (\cite{BaysThesis},
\cite{GavK}), and Serre's explicit description of the image of the Galois action 
on the Tate module as a subgroup of the profinite group $\GL_2 (\hat {\mathbb Z})$.

It the first author's belief that the inability of common model theoretic
language to digest these statements in their full power and generality is a
major obstacle in further exploring  these intriguing connections between
Shelah's excellent classes and deep algebro-geometric conjectures. The
homotopy theoretic approach to set theory discussed in the present paper and
in \cite{GaHa} is a toy example exploring the ways in which  homotopy
theoretic language could be introduced into the realm of model theory. 

Unfortunately, we were unable to use the model category $\Qt$ to associate such a homotopy structure to those classes of models. In fact, it is not even clear to us when this could be done: 

\begin{problem}\label{problem:aec}
Let $\mathfrak K$ be a (quasiminimal) excellent abstract elementary class (e.g. algebraically closed fields of characteristic $p$, models of pseudo-exponentiation). Let $\Qt(\mathfrak K)$ be the sub-category of $\Qt$ whose objects are elements of $\mathfrak K$ and such that for $\mathcal M, \mathcal N\in \Ob \mathfrak K$ there is an arrow $\mathcal M\lra \mathcal N$ if $\mathcal M\prec \mathcal N$. Are there natural model theoretic conditions under which $\Qt(\mathfrak K)$ is a model category? What about $\Qtk(\mathfrak K)$? Is there a similiar construction 
associating a model category to the class $\mathfrak K$?
\end{problem}

\noindent {\bf Acknowledgments.} 
The first author wishes to  thank his  Mother and Father for support, patience and more,  Boris Zilber to whose ideas this work owes a large debt, Artem Harmaty for attention to this work, and 
encouraging conversations.
 The first author would also like to thank the St. Petersburg Steklov Mathematical Institute (PDMI RAS) 
 and the participants of the seminar organised by N. Durov and A. Smirnov for their hospitality and insight into model categories.

The authors would like to thank Martin Bays, Sharon Hollander, Marco Porta, Alex Usvyatsov  and Mike Shulman for
reading very early drafts of this works, for their comments, suggestions and corrections. Thanks are also due to Sy Fridman and to Lyubomyr Zdomskyy for their comments, questions and ideas, some of which made their way into the present paper . 

%

\bibliographystyle{plain}
\bibliography{../../Bibfiles/harvard}

\end{document}